\documentclass[hidelinks,onefignum,onetabnum]{siamart251216}

\headers{Linear Decision Tree Policies for Integer Linear Programs}{T. Guyard, C. Oliveira, M. Schiffer, E. Uchoa, and T. Vidal}

\title{
    Linear Decision Tree Policies for \\ Integer Linear Programs%
    \thanks{Submitted to the editors: May 8, 2026.
    }
}

\author{
    \phantom{AAA} 
    Théo Guyard\thanks{SCALE-AI Chair in Data-Driven Supply Chains, Polytechnique Montréal, Canada.}\and%
    Cleber Oliveira\thanks{Tecgraf Institute, Pontifical Catholic University of Rio de Janeiro, Brazil.}\and%
    Maximilian Schiffer\thanks{School of Management \& MDSI, Technical University of Munich, Germany.}\and%
    \phantom{AAA} 
    Eduardo Uchoa\thanks{Fluminense Federal University, Brazil.}\and%
    Thibaut Vidal\footnotemark[2]%
}

\usepackage[setbb]{kmath}
\usepackage[utf8]{inputenc}
\usepackage[T1]{fontenc}

\usepackage{ifthen}

\usepackage{url} 
\usepackage[acronym]{glossaries}
\glsdisablehyper
\usepackage{cleveref}
\usepackage{lipsum}
\usepackage{enumitem}
\usepackage{comment}
\usepackage[sort,square,numbers]{natbib}

\usepackage{amsmath}
\usepackage{amsopn}
\usepackage{mathtools}

\usepackage{xcolor}
\usepackage{array}
\usepackage{rotating}
\usepackage[dvipsnames]{xcolor}
\usepackage{graphicx}
\usepackage{tikz}
\usetikzlibrary{decorations.pathmorphing,arrows.meta,positioning,shapes.multipart,calc,patterns,patterns.meta}
\usepackage{pgfplots}
\pgfplotsset{compat=newest}
\usepackage{framed}
\usepackage{booktabs}
\usepackage{siunitx}
\usepackage{subcaption}
\usepackage{pgfplotstable}
\usepackage{xstring}
\usetikzlibrary{intersections}
\usepgfplotslibrary{fillbetween}
\usepgfplotslibrary{groupplots}
\usetikzlibrary{matrix}
\usepackage[noend]{algpseudocode}
\usepackage{mdframed}
\usepackage{listings}
\lstdefinestyle{CStyle}{
    language=C,
    basicstyle=\ttfamily\small,
    keywordstyle=\color{blue}\bfseries,
    stringstyle=\color{teal},
    commentstyle=\color{gray}\itshape,
    numbers=left,
    numberstyle=\tiny\color{gray},
    stepnumber=1,
    numbersep=8pt,
    backgroundcolor=\color{gray!5},
    showspaces=false,
    showstringspaces=false,
    showtabs=false,
    frame=single,
    rulecolor=\color{gray!30},
    tabsize=4,
    captionpos=b,
    breaklines=true,
    breakatwhitespace=true,
    escapeinside={(*@}{@*)},
    morekeywords={uint, uint32_t, uint64_t}, 
}

\newsiamthm{assumption}{Assumption}
\newsiamthm{property}{Property}
\newsiamthm{remark}{Remark}
\newsiamthm{problem}{Problem}

\crefname{assumption}{assumption}{assumptions}
\crefname{property}{property}{properties}
\crefname{remark}{remark}{remarks}
\crefname{problem}{problem}{problems}

\crefname{subsection}{Section}{Sections}
\Crefname{subsection}{Section}{Sections}

\newcommand{\pset}{\mathcal{X}}

\newcommand{\pdim}{n}

\newcommand{\pvletter}{x}
\newcommand{\cvletter}{c}
\newcommand{\pv}{\mathbf{\pvletter}}
\newcommand{\cv}{\mathbf{\cvletter}}
\newcommand{\pvi}[1]{\pvletter_{#1}}

\newcommand{\policy}{\pi}
\newcommand{\policies}{\Pi}

\newcommand{\dualfaces}{\mathcal{F}}
\newcommand{\dualface}{F}
\newcommand{\divider}{h}
\newcommand{\dividers}{\mathcal{H}}
\newcommand{\costdomain}{\mathcal{C}}
\newcommand{\scalingfunc}{\phi}
\newcommand{\treecond}{M}
\newcommand{\idxentry}{i}

\newcommand{\nodesymbol}{N}

\newcommand{\nodelt}{H_{<}}

\newcommand{\nodegt}{H_{>}}
\newcommand{\nodetuple}{(\nodelt,\nodegt)}
\newcommand{\noderoot}{\nodesymbol_{0}}
\newcommand{\noderegion}{\mathcal{R}}

\newcommand{\childlt}[2]{#1_{<#2}}

\newcommand{\childgt}[2]{#1_{>#2}}
\newcommand{\treedepth}{D}

\newcommand{\nodeheight}{\Delta}
\newcommand{\nodedepth}{d}
\newcommand{\discrepfunc}[1]{\mu_{#1}}
\newcommand{\sampled}[2]{#1^{#2}}
\newcommand{\hyperplanenumber}{\kappa}

\newcommand{\true}{\textsc{True}}
\newcommand{\false}{\textsc{False}}
\newcommand{\usat}{\mathbf{u}}

\newcommand{\sdim}{d}
\newcommand{\knpweight}{\mathbf{w}}

\newcommand{\knpcapacity}{W}

\DeclareMathOperator*{\argmin}{argmin}
\DeclareMathOperator*{\argmax}{argmax}

\DeclareMathOperator{\convhull}{conv}

\DeclarePairedDelimiter{\ceil}{\lceil}{\rceil}

\newcommand{\1}{\mathbf{1}}

\newcommand{\bigO}{\mathcal{O}}
\newcommand{\card}[1]{|#1|}

\newcommand{\norm}[2]{\|#1\|_#2}
\newcommand{\opt}[1]{#1^{\star}}

\newcommand{\transpose}[1]{#1^{\mathrm{T}}}

\newacronym{dp}{DP}{dynamic programming}
\newacronym{ilp}{ILP}{integer linear problem}
\newacronym{ldt}{LDT}{linear decision tree}
\newacronym{lp}{LP}{linear program}
\newacronym{nns}{NNS}{nearest neighbor search}
\newacronym{ram}{RAM}{random access memory}

\begin{document}

\maketitle

\begin{abstract}
We study optimal decision policies, represented as linear decision trees, for integer linear programs with a fixed feasible set and varying cost vectors. Once synthesized for a given feasible set, they return an optimal solution for any queried cost vector through a sequence of linear tests. We show that there exists a policy performing this operation in a polynomial number of arithmetic operations in the worst case. In contrast, deciding whether there exists an exact policy with a prescribed maximum number of leaves is $\Sigma_2^p$-complete. Alongside these theoretical results, we develop a practical construction framework to synthesize policies within a specific subclass of linear decision trees. Our computational experiments show that, although policy synthesis can be time-intensive, it allows one to retrieve optimal solutions orders of magnitude faster than classical and specialized solution methods on repeated queries. Overall, this paradigm provides a different perspective on the solution of integer linear programs and offers a principled offline-online approach for repeated optimization.
\end{abstract}

\begin{keywords}
Integer Programming, Linear Decision Trees, Policy Synthesis
\end{keywords}

\begin{MSCcodes}
90C10, 
90C57, 
90C60  
\end{MSCcodes}

\vspace*{0.7cm}

\section{Introduction}
\label{sec:intro}

We focus on \glspl{ilp} of the form
\begin{equation}
    \max_{\pv \in \pset} \ \transpose{\cv}\pv
    \tag{$P$}
    \label{prob:prob}
\end{equation}
where $\cv \in \costdomain$ is a parameter vector within a domain $\costdomain \subseteq \kR^{\pdim}$ characterizing the objective function, and $\pset \subseteq \kZ^{\pdim}$ is a finite integer feasible set. Numerous such ILPs arise from applications across operations research, finance, and network design, among others~\citep{petropoulos2024operational}. The corresponding problems are often NP-hard. However, over several decades of research, a broad range of exact solution methods has emerged. Among the most successful approaches are those that exploit the structural properties of specific families of feasible sets. For instance, considerable effort has been devoted to deriving strong relaxations \citep{cornuejols1985traveling}, suitable decomposition schemes~\citep{desrosiers2005primer}, and exact polyhedral descriptions of the feasible set~\citep{Christof1996}. These insights have fueled significant improvements in the performance of modern solvers, enabling the solution of large instances \citep{Koch2022}.

\subsection*{Policies for Integer Linear Programs}
Despite these advances, state-of-the-art exact algorithms are typically not designed for efficient re-optimization when the parameter vector $\cv \in \costdomain$ varies. They need to be restarted from scratch and do not enable the exploitation of a fixed feasible-set structure across multiple instances. This observation motivates a fundamental algorithmic task: the synthesis of efficient \emph{decision policies} rather than solution methods for single instances, formalized as follows.\\

\begin{mdframed}
    \noindent
    Given a fixed feasible set $\pset \subseteq \kZ^{\pdim}$, a cost domain $\costdomain \subseteq \kR^{\pdim}$, and a family $\policies$ of policies $\policy : \costdomain \to \pset$, solve the problem
    \begin{equation}
    \label{eq:policy-design}
    \begin{array}{rlll}
        \textstyle
        \opt{\policy} \in &\argmin_{\policy \in \policies} & \ \ \Phi(\policy) \\
        & \text{subject to} 
        & \textstyle \ \ \policy(\cv) \in \argmax_{\pv \in \pset} \ \transpose{\cv}\pv
        & \forall \cv \in \costdomain
    \end{array}
    \end{equation}
    for some measure $\Phi : \policies \to \kR_+$ characterizing the complexity of evaluating a policy for a given cost vector.
\end{mdframed}
~

\noindent
From this perspective, a policy $\policy$ corresponds to a mapping from any cost vector in some target domain $\costdomain \subseteq \kR^{\pdim}$ to an optimal solution to the corresponding Problem~\eqref{prob:prob}. It is associated with a given feasible set $\pset \subseteq \kZ^{\pdim}$, which can be pre-processed during a \emph{construction} step. This operation may be computationally intensive since it is performed only once and can therefore be regarded as an offline cost. The family $\Pi$ reflects the nature of this construction operation and how the policy mapping is encoded. Once constructed, the policy can be \emph{evaluated} to retrieve an optimal solution corresponding to a given cost vector in Problem~\eqref{prob:prob}. The measure $\Phi$ reflects structural characteristics of the policy related to the complexity of its evaluation, which becomes the primary concern when several instances with varying cost vectors are solved.

The construction of decision policies raises fundamental questions: Can policies be constructed generically, regardless of the structure of the feasible set? Can general complexity guarantees be established on their evaluation? Can this paradigm yield practically meaningful algorithms for online optimization settings, where a series of problems with different parameter vectors must be solved? The purpose of this paper is to explore new answers to these questions.

\subsection*{Linear Decision Tree Policies}
In this paper, we focus on the family $\Pi$ of policies that encode a given feasible set $\pset \subseteq \kZ^{\pdim}$ into an \gls{ldt} during an offline construction step. Once such a policy has been synthesized, Problem~\eqref{prob:prob} is solved for a queried cost vector $\cv \in \costdomain$ by traversing the associated \gls{ldt}. Starting from the root, a branching decision is made at each node based on a linear test $\transpose{\mathbf{a}}\cv + b \odot 0$ with $\odot \in \{<,=,>\}$ for some $\mathbf{a} \in \kR^{\pdim}$ and $b \in \kR$. Eventually, the traversal reaches a leaf returning an optimal solution associated with this cost vector in Problem~\eqref{prob:prob}. If the tree has depth $\treedepth$, this query requires $\bigO(\pdim\treedepth)$ arithmetic operations in the worst case. This family of policies, therefore, provides both an interpretable representation and a natural notion of query complexity. We emphasize, however, that this is a non-uniform computational model: for each feasible set, a suitable \gls{ldt} must first be constructed and stored offline. Moreover, the complexity measure accounts only for the arithmetic operations performed during tree traversal and does not capture the cost of constructing and representing a potentially very large \gls{ldt} structure.

Some previous work has provided complexity analyses for \gls{ldt} policies, mostly from an existential perspective. In particular, Meyer auf der Heide \citep{meyer1984polynomial} highlighted that there exist \gls{ldt} policies of polynomial depth with respect to the problem dimension for specific classes of \glspl{ilp}, most notably the knapsack problem. This result uses geometric arguments based on hyperplane arrangements in the cost-vector space. Building on this research, Kolinek \citep{kolinek1987polynomial} demonstrated that for the traveling salesman, shortest path, and knapsack problems, there always exists an \gls{ldt} policy with depth $\treedepth \simeq 2\pdim^4\log_2(\pdim) + 2\pdim^4\log_2(\treecond) + \bigO(\pdim^3)$ for some $\treecond \in \kN$. Yet, no general framework covering arbitrary problems has been proposed, nor have concrete construction procedures been designed and empirically tested.

\subsection*{Contributions} 
In this paper, we pave the way toward generic \gls{ldt} policies for \glspl{ilp} and concrete construction strategies.
Our contributions are as follows:
\begin{itemize}
    \item We show that there exist \gls{ldt} policies that can be queried in a polynomial number of arithmetic operations for any~$\pset~\subseteq~\kZ^{\pdim}$ such that each $\pv \in \pset$ can be encoded in a polynomial number of bits. Our result thereby extends prior problem-specific results to a fairly general framework. In contrast, we show that deciding whether an exact \gls{ldt} policy with a prescribed maximum number of leaves exists is $\Sigma_2^p$-complete.
    \item We develop a systematic methodology to synthesize \gls{ldt} policies for \glspl{ilp}. Our approach seeks minimum-depth structures within a restricted class of \glspl{ldt}, in which linear tests are drawn from hyperplanes of the normal fan associated with the feasible set at hand.
    \item We show that the resulting \gls{ldt} policies benefit from an evaluation complexity smaller than that of the simplex method applied over the convex hull of the feasible set, even when initialized at the optimal solution. This highlights an intriguing perspective on the complexity of \glspl{ilp}. We also identify a notable connection with \glsdesc{nns} when the feasible set is binary.
    \item We conduct extensive computational experiments to assess the practical potential of this approach on small problem instances. In particular, we study both the construction and evaluation phases across several \gls{ilp} families. Although their synthesis can be computationally demanding, \gls{ldt} policies substantially outperform all benchmark methods in evaluation time.
\end{itemize}
\Cref{sec:existence,sec:dynamic} establish the existence and perform a practical synthesis of the proposed \gls{ldt} policies. \Cref{sec:numerics} contains numerical analyses of their construction and evaluation steps. \Cref{sec:related-works} provides an overview of related works, and \Cref{sec:conclusion} concludes our exposition with future research perspectives.

\section{Existence of Linear Decision Tree Policies}
\label{sec:existence}

We first investigate the existence of \gls{ldt} policies associated with generic \glspl{ilp}. Building on the notion of the normal fan associated with the feasible set and results from computational geometry, we show that an \gls{ldt} policy of polynomial depth always exists under a mild assumption. We then study the hardness of constructing such a policy. This approach also highlights a connection with \glsdesc{nns} problems in the pure binary case.

\subsection{Optimality Regions and Normal Fan}
\label{sec:existence:normal-fan}

For any $\pv \in \pset$, define the cone
\begin{equation}
    \label{eq:normal-fan-cone}
    F(\pv) = \kset{\cv \in \kR^{\pdim}}{\transpose{\cv}\pv \geq \transpose{\cv}\tilde{\pv} \;\; \forall \tilde{\pv} \in \pset}
\end{equation}
corresponding to all cost vectors for which this particular feasible solution is optimal in Problem~\eqref{prob:prob}, and denote by $\mathcal{V}(\pset)$ the set of vertices in $\convhull(\pset)$. Then, the family $\dualfaces(\pset) = \{F(\pv)\}_{\pv \in \mathcal{V}(\pset)}$ forms the \emph{normal fan} \citep[Chap.~7.1]{ziegler2012lectures} associated with $\convhull(\pset)$\footnote{For simplicity, we refer to $\dualfaces(\pset)$ as the normal fan associated with $\pset$.}.
By construction, we observe that evaluating a policy as defined in~\eqref{eq:policy-design} amounts to identifying the cones in this normal fan that contain the queried cost vector $\cv \in \kR^{\pdim}$.
This observation can be formalized as follows.

\begin{property}
    \label{prop:df-policy}
    Given the normal fan $\dualfaces(\pset) = \{F(\pv)\}_{\pv \in \mathcal{V}(\pset)}$ associated with some feasible set $\pset \subseteq \kZ^{\pdim}$, the policy
    \begin{equation}
        \label{eq:policy-voronoi}
        \policy(\cv) \in \kset{\pv \in \mathcal{V}(\pset)}{\cv \in F(\pv)}
    \end{equation}
    retrieves an optimal solution to Problem~\eqref{prob:prob} for any queried cost vector $\cv \in \costdomain$.
\end{property}

Note that the set in the right-hand side of \eqref{eq:policy-voronoi} contains all optimal solutions to Problem~\eqref{prob:prob}. However, only one is typically needed in practice. It can be selected using tie-breaking rules when multiple optima are present. Moreover, directly enumerating all cones in $\dualfaces(\pset)$ to find one that contains the queried cost vector is generally intractable. Indeed, the number of such cones, i.e., possible optimal solutions in Problem~\eqref{prob:prob}, generally grows exponentially with the problem dimension in classical \gls{ilp} optimization settings. In the sequel, we investigate how offline preprocessing effort can be invested in encoding the policy structure to facilitate its evaluation.

\subsection{Policies with Polynomial Evaluation Complexity}
\label{sec:existence:polynomial}

A suitable representation of the policy defined in \Cref{prop:df-policy} arises from hyperplanes separating adjacent cones in the normal fan $\dualfaces(\pset)$. Specifically, two distinct cones $\dualface(\pv)  \subseteq \kR^{\pdim}$ and $\dualface(\tilde{\pv}) \subseteq \kR^{\pdim}$ are said to be adjacent if $\dim(\dualface(\pv) \cap \dualface(\tilde{\pv})) = \pdim - 1$. In this case, they share a common boundary\footnote{All cones in $\dualfaces(\pset)$ share the linear space $\operatorname{lin}(\operatorname{aff}(\pset))^\perp$. Cost vectors in this subspace induce a constant objective over $\pset$ and can be handled separately, or the analysis may be carried on the quotient space obtained after factoring out this linear space.} corresponding to a hyperplane characterized by
\begin{equation}
    \divider_{\pv,\tilde{\pv}}(\cv) = \transpose{(\pv-\tilde{\pv})}\cv
    ,
\end{equation}
referred to as a \emph{divider}. Assessing whether $\divider_{\pv,\tilde{\pv}}(\cv) < 0$ or $\divider_{\pv,\tilde{\pv}}(\cv) > 0$ then determines which of the two associated feasible solutions in $\pset$ achieves the larger objective value for the queried cost vector in Problem~\eqref{prob:prob}. The case $\divider_{\pv,\tilde{\pv}}(\cv) = 0$ indicates that both achieve the same objective value.
Therefore, knowing the signs of all dividers in
\begin{equation}
    \label{eq:dividers}
    \dividers(\pset) = \kset{\divider_{\pv,\tilde{\pv}}}{(\pv,\tilde{\pv}) \in \mathcal{V}(\pset) \times \mathcal{V}(\pset), \ \pv \neq \tilde{\pv}, \ \dim(\dualface(\pv) \cap \dualface(\tilde{\pv})) = \pdim-1}
\end{equation}
when evaluated at the queried $\cv \in \costdomain$ is sufficient to recover an optimal solution to Problem~\eqref{prob:prob}. Notably, each divider is homogeneous (i.e., has no intercept) and is characterized by an integer-valued normal vector since $\pset \subseteq \kZ^{\pdim}$. This sign retrieval task can therefore be reduced to the so-called integer linear decision problem arising in computational geometry. Building on the prior work of Kolinek \citep{kolinek1987polynomial} in this vein, we obtain the following result.

\begin{property}
    \label{prop:query-complexity}
    Let $\pset \subseteq \kZ^{\pdim}$ and define $\treecond = \max_{\divider_{\pv,\tilde{\pv}} \in \dividers(\pset)}\norm{\pv-\tilde{\pv}}{\infty}$. Then, the policy given in \Cref{prop:df-policy} can be evaluated through a ternary \gls{ldt} of depth
    \begin{equation}
        \treedepth \simeq 2\pdim^4\log_2(\pdim) + 2\pdim^4\log_2(\treecond) + \bigO(\pdim^3)
    \end{equation}
    where branching operations are linear tests performed on the input cost vector $\cv \in \costdomain$.
\end{property}
\begin{proof}
    Querying the policy defined in \Cref{prop:df-policy} can be performed by assessing whether $\divider_{\pv,\tilde{\pv}}(\cv) < 0$, $\divider_{\pv,\tilde{\pv}}(\cv) = 0$, or $\divider_{\pv,\tilde{\pv}}(\cv) > 0$ for each $\divider_{\pv,\tilde{\pv}} \in \dividers(\pset)$. This operation corresponds to an integer linear decision problem with respect to the set of integer-valued normal vectors $\{\pv - \tilde{\pv}\}_{\divider_{\pv,\tilde{\pv}} \in \dividers(\pset)} \subseteq \kZ^{\pdim}$, as defined in \citep{kolinek1987polynomial}. Invoking the main theorem of \citep[Sec.~3]{kolinek1987polynomial} shows that this task can be carried out through a ternary \gls{ldt} of depth $\treedepth \simeq 2\pdim^4 \log_2(\pdim) + 2\pdim^4 \log_2(\treecond) + \bigO(\pdim^3)$ with $\treecond = \max_{\divider_{\pv,\tilde{\pv}} \in \dividers(\pset)}\norm{\pv - \tilde{\pv}}{\infty}$.
\end{proof}

Traversing an \gls{ldt} requires a number of arithmetic operations that is polynomial in its depth and in the dimension of the linear tests used for branching operations. The depth of the policy in \Cref{prop:query-complexity} is then polynomial in $\pdim$ whenever $\log_2(M)$ is polynomially bounded in $\pdim$. Translated to Problem~\eqref{prob:prob}, this condition requires that any feasible solution $\pv \in \pset$ can be encoded in a polynomial number of bits with respect to the dimension $\pdim$. Note that this assumption is met in most \gls{ilp} optimization settings. This leads to the following consequence.\\

\begin{mdframed}
    \noindent
    For any feasible set $\pset \subseteq \kZ^{\pdim}$ such that all solutions $\pv \in \pset$ can be encoded in a polynomial number of bits, there exists a decision policy for Problem~\eqref{prob:prob} represented as an \gls{ldt} whose query requires only a polynomial number of arithmetic operations with respect to $\pdim$.
\end{mdframed}
~

\noindent
This result is existential and shows that efficient policies for \glspl{ilp} exist in the \gls{ldt} query model, independently of the structure of the feasible set. A natural question, addressed in the next section, is whether such \gls{ldt} representations can be constructed in practice. We also stress that a polynomial number of arithmetic operations does not by itself imply polynomial-time complexity in the standard Random Access Memory model of computation \citep[Chap.~1]{aho1974design}. Indeed, \glspl{ldt} form a non-uniform computational model, in which the policy construction cost is not accounted for, and complexity is measured only by the number of linear tests performed at query time. Moreover, if the policy is represented explicitly as a tree, it must contain at least one leaf for each relevant optimality region, and therefore at least as many leaves as there are vertices of $\mathrm{conv}(\pset)$ that are optimal for some cost vector. For many combinatorial optimization problems, this number is exponential in $\pdim$. Nonetheless, our numerical experiments show that such \gls{ldt} policies can achieve substantially lower evaluation times than benchmark methods in useful regimes.

\subsection{Complexity of Policy Synthesis}
\label{sec:existence:hardness}

\Cref{prop:query-complexity} establishes the existence of \gls{ldt} policies with polynomial query complexity under a mild encoding assumption. This result, however, does not characterize the computational effort required to produce such a policy. We now investigate this question by characterizing the complexity of the following decision problem seeking a compact LDT policy representation.

\begin{problem}[\textsc{Ldt-s}]
    \label{prob:bounded-ldt}
    Consider an instance $(s,\pdim,\pset,\costdomain)$ where (i) $\pdim$ and $s$ are given in unary, and (ii) $\pset \subseteq \kZ^{\pdim}$ is finite and non-empty, and $\costdomain \subseteq \kQ^{\pdim}$ is non-empty, both with elements of polynomial encoding length in $\pdim$ and with polynomial-time membership operations ``$\pv \in \pset$'' and ``$\cv \in \costdomain$''. The problem \textsc{Ldt-s} asks whether there exists a \gls{ldt} $T$ with at most $s$ leaves such that
    \begin{equation}
        \label{eq:bounded-ldt-validity}
        \forall \cv \in \costdomain \; : \; T(\cv) \in \argmax_{\pv \in \pset} \ \transpose{\cv}\pv
    \end{equation}
    where $T(\cv) \in \pset$ denotes the solution returned by $T$ for some $\cv \in \costdomain$. All rational coefficients appearing in the \gls{ldt} branching tests and all solutions attached to its leaves are required to have polynomial encoding length.
\end{problem}
The restriction $\costdomain \subseteq \kQ^{\pdim}$ in \Cref{prob:bounded-ldt} is imposed to comply with standard encoding conventions and reflects finite-precision computation. Since a LDT with at most $s$ leaves contains at most $2s-1$ nodes, and since $s$ is given in unary, any candidate policy in \Cref{prob:bounded-ldt} has polynomial encoding length. The next result establishes the complexity of \textsc{Ldt-s}.

\begin{proposition}
    \label{prop:bounded-ldt}
    The decision problem \textsc{Ldt-s} is $\Sigma_2^p$-complete.
\end{proposition}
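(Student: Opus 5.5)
Membership in $\Sigma_2^p$ and $\Sigma_2^p$-hardness are handled separately.

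\emph{Membership.} The problem has the form $\exists T\,\forall \cv\,\forall \pv$. Because $s$ is given in unary and all coefficients must have polynomial encoding length, a candidate \gls{ldt} $T$ with at most $s$ leaves (hence at most $2s-1$ nodes) is a polynomial-size certificate, so the existential player guesses $T$. Validity \eqref{eq:bounded-ldt-validity} fails exactly when some $\cv \in \costdomain$ and $\pv \in \pset$ (both of polynomial encoding length by assumption) satisfy $\transpose{\cv}\pv > \transpose{\cv}T(\cv)$ or $T(\cv)\notin\pset$. The universal player therefore ranges over pairs $(\cv,\pv)$. The polynomial-time verifier checks $\cv\in\costdomain$ and $\pv\in\pset$ via the membership oracles, traverses $T$ on $\cv$ using polynomially many rational operations on polynomial-size numbers, checks $T(\cv)\in\pset$, and compares the two objective values. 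This gives an $\exists\forall$ polynomial-time predicate, so \textsc{Ldt-s} $\in \Sigma_2^p$.

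\emph{Hardness.} I would reduce from $\exists\forall$-3SAT, i.e., $\Sigma_2$-SAT: decide whether $\exists \mathbf{y}\in\{0,1\}^{p}\,\forall \mathbf{z}\in\{0,1\}^{q}$ such that $\varphi(\mathbf{y},\mathbf{z})$ holds. The idea is to use the implicit, membership-defined $\pset$ and $\costdomain$ to encode $\varphi$, and the leaf bound to force the tree to commit to $\mathbf{y}$.

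The candidate construction takes $s=1$, so the \gls{ldt} is a single leaf returning a fixed solution $\pv^\ast$. Validity then means $\pv^\ast$ is optimal for every $\cv\in\costdomain$. I would embed $\mathbf{y}$ into the coordinates of candidate solutions. Let $\pset$ consist of the points $(\mathbf{y},0,1)$ for all $\mathbf{y}$, together with "witness" points $(0,\mathbf{z}\text{-code},0)$, scaled so that ties and strict inequalities are controlled. Let $\costdomain$ contain, for each $\mathbf{y}$ and $\mathbf{z}$ with $\varphi(\mathbf{y},\mathbf{z})$ false, a cost vector under which $(\mathbf{y},0,1)$ is beaten. The instance must also prevent some $\mathbf{y}$-point from being optimal for all costs without encoding $\varphi$. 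To do this, I would let each cost in $\costdomain$ identify a target assignment via large weights $\pm L$ on the $\mathbf{y}$-block, making $(\mathbf{y},0,1)$ optimal only against costs keyed to the same $\mathbf{y}$.

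Since a single leaf must work for all costs, this keying is delicate. A cleaner alternative is to let $s=1$ and define $\costdomain=\{\cv_{\mathbf{z}}\}$, indexed by $\mathbf{z}$ only (membership is polynomial-time decidable), and $\pset=\{(\mathbf{y},1)\}\cup\{\mathbf{w}_{\mathbf{z}}\}$. The objective $\transpose{\cv_{\mathbf{z}}}(\mathbf{y},1)$ is designed to equal $1$ if $\varphi(\mathbf{y},\mathbf{z})$ holds and $0$ otherwise. This is impossible with a linear objective, since $\varphi$ is not linear in $\mathbf{y}$. The fix is to let $\pset$ contain points encoding $(\mathbf{y},\text{clause-satisfaction indicators})$, and to use membership to enforce consistency. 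Checking that linear objectives can then detect an unsatisfied clause, for example by making $\cv_{\mathbf{z}}$ reward exactly the literals of $\mathbf{z}$ and comparing against a threshold competitor point, is the main obstacle. Getting the exact gadget and the strictness of comparisons right, including the tie-breaking on the zero sets of dividers, is the step I expect to need the most care.

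If that fails, I would instead reduce from a known $\Sigma_2^p$-complete minimization problem such as minimum DNF or succinct set cover. There the leaves of $T$ play the role of terms and $s$ the budget, and $\costdomain$ is a finite set of cost vectors, one per input point, each with a unique optimal solution encoding the label.
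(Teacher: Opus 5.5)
\textbf{The membership half is fine; the hardness half is missing, and your main route for it cannot work.} Your membership argument is essentially the paper's. You guess $T$, which has polynomial size because $s$ is unary, and then run a polynomial-time check under $\forall(\cv,\pv)$.

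\textbf{Why $s=1$ is a dead end.} With $s=1$, \textsc{Ldt-s} only asks whether a single $\pv\in\pset$ is optimal for every $\cv\in\costdomain$. This restricted problem lies in $\mathrm{P}^{\mathrm{NP}}$, for the following reasons.
\begin{enumerate}
\item If some point is optimal for every cost in a finite $S\subseteq\costdomain$, then the maximizers of $\sum_{\cv\in S}\cv$ are exactly the common maximizers.
\item Start with one cost in $S$. Using the NP oracle, compute a maximizer $\pv^\circ$ of the current sum and check it against each cost in $S$. If it fails for some cost, the answer is no.
\item Otherwise, ask the oracle for some $\cv'\in\costdomain$ under which $\pv^\circ$ is beaten. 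If none exists, the answer is yes; if one exists, add $\cv'$ to $S$ and repeat.
\item Each addition replaces the common maximizing face of $\convhull(\pset)$ by a proper subface, since $\pv^\circ$ is lost. So there are at most $n+2$ rounds.
\end{enumerate}
Since $\mathrm{P}^{\mathrm{NP}}\subseteq\Sigma_2^p\cap\Pi_2^p$, a $\Sigma_2^p$-hardness reduction with $s=1$ would give $\Sigma_2^p=\Pi_2^p$. This is the structural cause of the ``keying'' obstacle you ran into: every solution the reduction wants to keep must tie at the maximum under every cost. More care with ties or strict inequalities will not fix it; the leaf budget has to do real work.

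\textbf{The source problem also needs a DNF matrix.} If your matrix is a CNF, as your references to unsatisfied clauses suggest, the source problem is not $\Sigma_2^p$-hard. $\exists\mathbf y\,\forall\mathbf z\,\bigwedge_j C_j$ collapses to satisfiability of the $\mathbf y$-parts of the clauses, which is in NP. The $\Sigma_2^p$-complete form has a DNF matrix.

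\textbf{The fallback lacks a gadget, and its one design choice is backwards.} It names the right kind of source problem but gives no construction. The one feature you commit to, a unique optimal solution per cost, goes the wrong way. If every cost has a unique optimum, $T(\cv)$ is forced, and leaves can no longer be read as selected terms.

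\textbf{What the paper does instead.} It reduces from \textsc{Minimum DNF Tautology}: given a tautological 3-DNF $\bigvee_k D_k$, is there a sub-disjunction of at most $s$ terms that is still a tautology? It deliberately creates many optima.
\begin{itemize}
\item Let $P_k,N_k$ be the indices of the positive and negative literals of $D_k$, and $\ell_k=|P_k|+|N_k|$.
\item Term $D_k$ becomes $\pv^k=(1-2\ell_k,2\mathbf{1}_{P_k},2\mathbf{1}_{N_k})$, and assignment $\usat$ becomes $\cv(\usat)=(1,\usat,\mathbf{1}-\usat)$.
\item Then $\transpose{\cv(\usat)}\pv^k=1-2m_k(\usat)$, where $m_k(\usat)$ counts the false literals of $D_k$. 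This equals $1$ exactly when $D_k(\usat)$ holds and is at most $-1$ otherwise.
\item Because $\varphi$ is a tautology, the optimum is always $1$. So the optimal set at $\cv(\usat)$ is precisely the set of terms satisfied by $\usat$.
\item A valid tree with at most $s$ leaves must therefore return a satisfied term for every $\usat$. Its reachable leaf labels thus form a sub-tautology of at most $s$ terms.
\item Conversely, a sub-tautology $D_{j_1},\dots,D_{j_t}$ yields a decision list testing the sign of $\transpose{\cv}\pv^{j_h}$ in turn. This test is exact because the values are odd integers.
\end{itemize}
The missing idea is that hardness comes from the tree's freedom to choose among several optimal solutions, with $s$ limiting how many distinct solutions it may use.
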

\begin{proof}
    We first establish membership in $\Sigma_2^p$, and then prove hardness by reduction from \textsc{Minimum DNF Tautology}  \citep[Problem~L7]{schaefer2002completeness}.
    \begin{itemize}
        \item \emph{Membership:} A $\Sigma_2^p$ procedure existentially guesses a \gls{ldt} $T$ with at most $s$ leaves. Since a LDT with at most $s$ leaves has at most $2s-1$ nodes, $s$ is given in unary, and all branching coefficients and leaf solutions have polynomial encoding length, this guess has polynomial length in the size of the instance. The procedure also verifies that every solution attached to a leaf of $T$ belongs to $\pset$. Now, the condition \eqref{eq:bounded-ldt-validity} rewrites as
        \begin{equation}
            \label{eq:bounded-ldt-quantifiers}
            \forall \cv \ \forall \pv :
            \left[
                \cv \in \costdomain \ \wedge \ \pv \in \pset
                \implies
                \transpose{\cv}T(\cv) \geq \transpose{\cv}\pv
            \right]
            ,
        \end{equation}
        where both quantifiers range over vectors of polynomial encoding length in $\pdim$ by the conventions of \Cref{prob:bounded-ldt}. For fixed $(T,\cv,\pv)$, the predicate between brackets can be decided in polynomial time: one checks membership in $\costdomain$ and in $\pset$, traverses $T$ to evaluate $T(\cv)$, and performs a single arithmetic comparison. The problem \textsc{Ldt-s} is therefore an existential guess of polynomial length followed by a $\forall\forall$ quantification of a polynomial-time predicate, which gives membership in $\Sigma_2^p$ \citep{stockmeyer1976polynomial}.

        \item \emph{Hardness:} \textsc{Minimum DNF Tautology} asks whether
        \begin{equation*}
            \varphi(\usat) = \bigvee_{k=1}^{K} D_k(\usat)
        \end{equation*}
        for a given tautological formula $\varphi$ in $3$-\textsc{dnf}
        over variables $\usat \in \{0,1\}^{p}$ and an integer $s$, there exists a subset of at most $s$ terms whose disjunction remains a tautology. We may assume, w.l.o.g., that the terms are distinct, consistent, and contain no repeated literals.
        For each term $D_k$, let $P_k$ and $N_k$ denote the indices of its positive and negative literals, respectively, and define $\ell_k = |P_k|+|N_k|$. Let $\mathbf{1}_{P_k},\mathbf{1}_{N_k} \in \{0,1\}^{p}$ denote the corresponding incidence vectors. We set $\pdim = 1+2p$ and define
        \begin{equation*}
            \begin{aligned}
                \pv^k
                &=
                \left(
                    1-2\ell_k,\,
                    2\mathbf{1}_{P_k},\,
                    2\mathbf{1}_{N_k}
                \right),
                && k \in \{1,\ldots,K\},
                \\
                \cv(\usat)
                &=
                \left(
                    1,\,
                    \usat,\,
                    \mathbf{1}_{p}-\usat
                \right),
                && \usat \in \{0,1\}^{p},
                \\
                \pset
                &=
                \kset{\pv^k}{k \in \{1,\ldots,K\}},
                &\quad
                \costdomain
                &=
                \kset{\cv(\usat)}{\usat \in \{0,1\}^{p}}.
            \end{aligned}
        \end{equation*}
        The resulting instance $(s,\pdim,\pset,\costdomain)$ of \textsc{Ldt-s} complies with the encoding conventions of \Cref{prob:bounded-ldt} and can be constructed in polynomial time. In particular, $\pset$ is explicitly listed, while membership in $\costdomain$ is checked by verifying that the last $p$ coordinates are the complements of the preceding $p$ binary coordinates.

        Denote by $m_k(\usat)$ the number of literals of $D_k$ that are \false{} under assignment $\usat$. We obtain
        \begin{equation}
            \label{eq:term-score}
            \transpose{\cv(\usat)}\pv^k
            =
            1-2\ell_k
            +2\sum_{i\in P_k}\usat_i
            +2\sum_{i\in N_k}(1-\usat_i)
            =
            1-2m_k(\usat).
        \end{equation}
        Consequently,
        \begin{equation}
            \label{eq:optimal-iff-term}
            \transpose{\cv(\usat)}\pv^k = 1
            \quad\Longleftrightarrow\quad
            D_k(\usat)=\true{},
        \end{equation}
        whereas $\transpose{\cv(\usat)}\pv^k \leq -1$ whenever $D_k(\usat)=\false{}$. Since $\varphi$ is a tautology, at least one term is \true{} for every assignment, and therefore
        \begin{equation}
            \label{eq:term-optimum}
            \max_{\pv\in\pset}
            \transpose{\cv(\usat)}\pv
            =
            1
            \qquad
            \forall \usat\in\{0,1\}^{p}.
        \end{equation}

        We are now in a position to establish the polynomial-time reduction between \textsc{Minimum DNF Tautology} and \textsc{Ldt-s}.
        \begin{itemize}[label=$\empty$]
            \item $(\impliedby)$ Suppose that \textsc{Minimum DNF Tautology} is \true{}, and let $J=\{j_1,\ldots,j_t\}$, with $t\leq s$, index terms whose disjunction is a tautology. Construct a \gls{ldt} that, for $h=1,\ldots,t-1$, leads to a leaf node associated with $\pv^{j_h}$ when $\transpose{\cv}\pv^{j_h}>0$, leads to a dummy leaf node when $\transpose{\cv}\pv^{j_h}=0$ since this condition can never be met for any $\cv \in \costdomain$ according to \eqref{eq:term-score}, and proceed to the next child node when $\transpose{\cv}\pv^{j_h}<0$. Its last leaf returns $\pv^{j_t}$. Moreover, at least one selected term is \true{} for every assignment, so the policy always returns a solution of value one. It is therefore valid by \eqref{eq:term-optimum} and has $t\leq s$ leaves. Hence, \textsc{Ldt-s} is \true{}.

            \item $(\implies)$ Suppose that \textsc{Ldt-s} is \true{}, and let $T$ be a valid \gls{ldt} with at most $s$ leaves. Let $J$ denote the indices of the distinct solutions attached to the reachable leaves of $T$. We have $|J|\leq s$. For any assignment $\usat$, validity of $T$ and \eqref{eq:term-optimum} imply that the solution returned by $T$ has objective value one. By \eqref{eq:optimal-iff-term}, its associated term is therefore \true{} under $\usat$. Hence,
            \begin{equation*}
                \bigvee_{k\in J} D_k(\usat)
                =
                \true{}
                \qquad
                \forall \usat\in\{0,1\}^{p}.
            \end{equation*}
            The terms indexed by $J$ thus form a tautology, and \textsc{Minimum DNF Tautology} is \true{}.
        \end{itemize}
    \end{itemize}
    Combining membership in $\Sigma_2^p$ and $\Sigma_2^p$-hardness yields that \textsc{Ldt-s} is $\Sigma_2^p$-complete.
\end{proof}

\subsection{Connection to Nearest Neighbor Search}
\label{sec:existence:nns}

We conclude this section by focusing on binary feasible sets $\pset \subseteq \{0,1\}^{\pdim}$. In this case, \Cref{prop:query-complexity} holds with 
\begin{equation}
    \treedepth \simeq 2\pdim^4\log_2(\pdim) + \bigO(\pdim^3)
\end{equation}
since $\treecond = 1$, ensuring that \gls{ldt} policies with polynomial query complexity exist without further assumptions. Moreover, the normal fan $\dualfaces(\pset)$ also admits a simple geometric interpretation in this setting, resulting from the following result.

\begin{proposition}
    \label{prop:nns}
    Define $\scalingfunc(\pv) = 2 \pv - \1$ and $\scalingfunc(\pset) = \kset{\scalingfunc(\pv)}{\pv \in \pset}$. For any binary feasible set $\pset \subseteq \{0,1\}^{\pdim}$, Problem~\eqref{prob:prob} can be equivalently formulated as
    \begin{equation}
        \label{prob:nns}
        \min_{\scalingfunc(\pv) \in \scalingfunc(\pset)} \norm{\scalingfunc(\pv) - \cv}{2}
        ,
    \end{equation}
    and $\opt{\pv} \in \pset$ is an optimal solution to Problem~\eqref{prob:prob} if and only if $\scalingfunc(\opt{\pv}) \in \scalingfunc(\pset)$ is an optimal solution to Problem~\eqref{prob:nns}.
\end{proposition}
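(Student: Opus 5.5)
The plan is to expand the squared Euclidean distance and observe that, on the image $\scalingfunc(\pset)$, every term except a multiple of the linear objective is constant. Since $\norm{\cdot}{2}\geq 0$ and $t\mapsto t^2$ is strictly increasing on $\kR_+$, minimizing $\norm{\scalingfunc(\pv)-\cv}{2}$ over $\scalingfunc(\pset)$ is equivalent to minimizing its square. We therefore write
\begin{equation*}
    \norm{\scalingfunc(\pv)-\cv}{2}^2
    =
    \norm{\scalingfunc(\pv)}{2}^2
    - 2\transpose{\cv}\scalingfunc(\pv)
    + \norm{\cv}{2}^2 .
\end{equation*}

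The key observation is that binarity makes the first term constant. Each coordinate of $\scalingfunc(\pv)=2\pv-\1$ lies in $\{-1,1\}$ whenever $\pv\in\{0,1\}^{\pdim}$, so $\norm{\scalingfunc(\pv)}{2}^2=\pdim$ for every $\pv\in\pset$. The last term $\norm{\cv}{2}^2$ does not depend on $\pv$ either.

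It then remains to unfold the middle term as $\transpose{\cv}\scalingfunc(\pv) = 2\transpose{\cv}\pv - \transpose{\cv}\1$. This gives
\begin{equation*}
    \norm{\scalingfunc(\pv)-\cv}{2}^2 = \pdim + \norm{\cv}{2}^2 + 2\transpose{\cv}\1 - 4\transpose{\cv}\pv .
\end{equation*}
For the fixed queried $\cv$, this is a constant minus $4\transpose{\cv}\pv$. Minimizing the distance over $\scalingfunc(\pset)$ is thus the same as maximizing $\transpose{\cv}\pv$ over $\pset$.

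To conclude the ``if and only if'' statement, we note that $\scalingfunc$ is an affine bijection from $\pset$ onto $\scalingfunc(\pset)$, with inverse $\mathbf{y}\mapsto(\mathbf{y}+\1)/2$. Hence $\opt{\pv}$ attains the maximum in Problem~\eqref{prob:prob} exactly when $\scalingfunc(\opt{\pv})$ attains the minimum in Problem~\eqref{prob:nns}, and the two argmin/argmax sets correspond under $\scalingfunc$.

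There is no real obstacle. The only step that needs care is the constancy of $\norm{\scalingfunc(\pv)}{2}^2$: this is precisely where the binary assumption is used, and the equivalence fails for general integer feasible sets.
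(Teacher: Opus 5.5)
Your proposal is correct and follows essentially the same route as the paper's proof. Both proofs expand the squared distance, use binarity to make $\norm{\scalingfunc(\pv)}{2}^2 = \pdim$ constant, discard the constant terms, and conclude through the bijection $\pv \leftrightarrow \scalingfunc(\pv)$. You are slightly more explicit than the paper in writing out $\transpose{\cv}\scalingfunc(\pv) = 2\transpose{\cv}\pv - \transpose{\cv}\1$, which the paper leaves implicit in its final ``variable shift'' step.
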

\begin{proof}
    The objective function of Problem~\eqref{prob:nns} can be squared and expanded as $\norm{\scalingfunc(\pv) - \cv}{2}^2 = \norm{\scalingfunc(\pv)}{2}^2 - 2\transpose{\cv}\scalingfunc(\pv) + \norm{\cv}{2}^2$. Moreover, we have $\norm{\scalingfunc(\pv)}{2}^2 = \sum_{\idxentry=1}^{\pdim} (2\pvi{\idxentry} - 1)^2 = \sum_{\idxentry=1}^{\pdim} 4\pvi{\idxentry}^2 - 4\pvi{\idxentry} + 1^2 = \sum_{\idxentry=1}^{\pdim} 4\pvi{\idxentry} - 4\pvi{\idxentry} + 1 = \pdim$ since $\pv \in \{0,1\}^{\pdim}$. Hence, both terms $\norm{\scalingfunc(\pv)}{2}^2$ and $\norm{\cv}{2}^2$ are constant. Discarding them, eliminating the multiplicative factor $2$, and swapping the sign of the resulting objective function allows reformulating Problem~\eqref{prob:nns} as $\max_{\scalingfunc(\pv) \in \scalingfunc(\pset)} \ \transpose{\cv}\scalingfunc(\pv)$. Using the bijective variable shift $\pv \leftrightarrow \scalingfunc(\pv)$ finally leads to Problem~\eqref{prob:prob}.
\end{proof}

The above result shows that an \gls{ilp} with a binary feasible set $\pset \subseteq \{0,1\}^{\pdim}$ can always be cast as an exact \gls{nns} seeking the element in the transformed feasible set $\scalingfunc(\pset) \subseteq \{-1,1\}^{\pdim}$ closest to the queried cost vector $\cv \in \costdomain$. In this case, the cones of the normal fan $\dualfaces(\pset)$ defined in \eqref{eq:normal-fan-cone} can be expressed as
\begin{equation}
    F(\pv) = \kset{\cv \in \kR^{\pdim}}{\norm{\cv - \scalingfunc(\pv)}{{}} \leq \norm{\cv - \scalingfunc(\tilde{\pv})}{{}} \;\; \forall \tilde{\pv} \in \pset}
\end{equation}
for all $\pv \in \pset$, and $\dualfaces(\pset)$ coincides with the Voronoi diagram \citep{aurenhammer1991voronoi} induced by $\scalingfunc(\pset)$. Evaluating the policy proposed in \Cref{prop:df-policy} then amounts to performing an exact \gls{nns} query. This observation opens the door to policy designs that leverage the rich literature on \gls{nns} methods \citep{abbasifard2014survey}.

\section{Construction of Linear Decision Tree Policies}
\label{sec:dynamic}

Constructing a policy that matches the polynomial complexity bound of \Cref{prop:query-complexity} is challenging. Indeed, although of polynomial depth, the associated \gls{ldt} is expected to be large with at least as many leaves as there are vertices in $\convhull(\pset)$. \Cref{prop:bounded-ldt} then suggests a costly construction operation. Moreover, the results of Section~\ref{sec:existence} impose no restriction on the linear tests used for branching operations. In principle, one may therefore need to choose, at each node, among infinitely many candidate tests to be guaranteed to obtain the desired complexity bound.

\subsection{Practical Policy Structures}

In the sequel, we focus on \gls{ldt} policies whose branching operations are solely based on dividers from the set $\dividers(\pset)$. These dividers are finite in number and correspond to the actual boundaries between cost-vector regions associated with different optimal solutions in Problem~\eqref{prob:prob}. They therefore arise as natural candidates for branching tests within the \gls{ldt} structure. Moreover, we only consider \emph{binary} branching operations, although \Cref{prop:query-complexity} is stated for ternary ones. We assume a tie-breaking rule to handle cases where the queried cost vector satisfies $\divider_{\pv,\tilde{\pv}}(\cv) = 0$ for some $\divider_{\pv,\tilde{\pv}} \in \dividers(\pset)$, meaning that both feasible solutions $\pv \in \pset$ and $\tilde{\pv} \in \pset$ yield the same objective in Problem~\eqref{prob:prob}. Ternary structures are only instrumental in recovering \emph{all} optimal solutions using the policy defined in \Cref{prop:df-policy}, which is usually not required in practical applications. The \gls{ldt} structure of the policies considered can then be formally defined as follows.\footnote{For the sake of clarity, we drop the explicit reference to the feasible solutions in $\pset$ in the notation of cones in $\dualfaces(\pset)$ and dividers in $\dividers(\pset)$ when their definition is clear from the context.}
\begin{definition}
    \label{def:node-attributes}
    Given the normal fan $\dualfaces(\pset)$ associated with some $\pset \subseteq \kZ^{\pdim}$, we consider policies represented as \glspl{ldt} where each node corresponds to a pair of sets $\nodesymbol = \nodetuple$ containing all dividers $\divider \in \dividers(\pset)$ that have respectively yielded $\divider(\cv) < 0$ or $\divider(\cv) > 0$ up to this stage of the structure for the queried cost vector $\cv \in \costdomain$.
    The region, set of candidate cones, and dividers associated with node $\nodesymbol$ are defined as
    \begin{align}
        \label{eq:node-region}
        \noderegion(\nodesymbol) &= \kset{\cv \in \costdomain}{\divider(\cv) < 0 \ \forall \divider \in \nodelt \;\;\text{and}\;\; \divider(\cv) > 0 \ \forall \divider \in \nodegt}, \\
        \label{eq:node-cones}
        \dualfaces(\nodesymbol) &= \kset{\dualface \in \dualfaces(\pset)}{\noderegion(\nodesymbol) \cap \dualface \neq \emptyset}, \\
        \label{eq:node-dividers}
        \dividers(\nodesymbol) &= \kset{\divider \in \dividers(\pset)}{\noderegion(\nodesymbol) \cap \divider \neq \emptyset},
    \end{align}
    respectively, and any $\divider \in \dividers(\pset)$ induces two children $\childlt{\nodesymbol}{\divider}$ and $\childgt{\nodesymbol}{\divider}$, obtained by appending this element to $\nodelt$ and $\nodegt$, respectively.
\end{definition}

Starting from the root $\noderoot=(\emptyset,\emptyset)$ representing the entire cost domain region $\noderegion(\noderoot) = \costdomain$, each branching test determines the sign of the selected divider when evaluated at the queried $\cv \in \costdomain$, thereby discarding cones of the normal fan that cannot contain this cost vector. The set $\dualfaces(\nodesymbol)$ indicates the cones that may still contain the queried cost vector at node $\nodesymbol$, while $\dividers(\nodesymbol)$ corresponds to dividers that can be used to further refine its localization. As deeper nodes are explored, the cardinality of these sets naturally decreases. By design, each branching operation discards at least one cone and one divider. Eventually, a leaf node associated with a unique cone of the normal fan $\dualfaces(\pset)$ remains, which precisely characterizes an optimal solution to Problem~\eqref{prob:prob} corresponding to the input cost vector $\cv \in \costdomain$ through \Cref{prop:df-policy}. \Cref{fig:ldt} illustrates an example of such an \gls{ldt} policy structure.

\begin{figure}[htbp]
    \centering
    \tikzset{
    level 1/.style={level distance=
    1.5cm, sibling distance=3.5cm},
    level 2/.style={level distance=1.5cm, sibling distance=1.75cm},
    treenode/.style={draw, rectangle, rounded corners, thick, minimum size=4mm, inner sep=2pt, font=\small},
    leafnode/.style={draw, circle, thick, minimum size=4mm, inner sep=2pt, font=\small},
    edge from parent/.style={-{Stealth}, draw=black},
    edge label/.style={midway, above, sloped, font=\small, yshift=-2pt},
}

\begin{tikzpicture}
    \begin{scope}[xshift=-0.25\linewidth,scale=1.25]
        \coordinate (O) at (0,0);
        \coordinate (P1) at (-1,-1);
        \coordinate (P2) at (-1,0);
        \coordinate (P3) at (0,1);
        \coordinate (P4) at (1,0);
        \coordinate (P5) at (2,0);
        \coordinate (H12) at (-2.2,0);
        \coordinate (H23) at (-2.2,2.2);
        \coordinate (H35) at (1.15,2.2);
        \coordinate (H15) at (0.8,-2.2);

        \filldraw[BurntOrange, opacity=0.25] (0,0) -- (H12) -- (-2.2,-2.2) -- (H15) -- cycle;
        \filldraw[Cerulean, opacity=0.25] (0,0) -- (H12) -- (H23) -- cycle;
        \filldraw[Red, opacity=0.25] (0,0) -- (H23) -- (H35) -- cycle;
        \filldraw[ForestGreen, opacity=0.25] (0,0) -- (H15) -- (2.2,-2.2) -- (2.2,2.2 ) -- (H35) -- cycle;

        \draw[-{Stealth}] (0,-2.25) -- (0,2.25);
        \draw[-{Stealth}] (-2.25,0) -- (2.25,0);
        \foreach \x in {-2,-1,1,2} {
            \draw[gray, dotted] (\x,-2.25) -- (\x,2.25);
            \draw[thick] (\x,-0.075) -- (\x,0.075);
        }
        \foreach \y in {-2,-1,1,2} {
            \draw[gray, dotted] (-2.25,\y) -- (2.25,\y);
            \draw[thick] (-0.075,\y) -- (0.075,\y);
        }

        \draw[gray] (-1,0) rectangle ($(-1,0)+(0.125,-0.125)$);
        \draw[gray, rotate around={-45:(-0.5,0.5)}] (-0.5,0.5) rectangle ($(-0.5,0.5)+(0.125,-0.125)$); 
        \draw[gray, rotate around={-117:(0.4125,0.7925)}] (0.4125,0.7925) rectangle ($(0.4125,0.7925)+(0.125,-0.125)$);
        \draw[gray, rotate around={-70:(0.22,-0.59)}] (0.22,-0.59) rectangle ($(0.22,-0.59)+(-0.125,0.125)$); 

        \draw[opacity=0] (O) -- (H12) node[gray,opacity=1,midway,above,sloped,font=\scriptsize,xshift=-0.75cm,yshift=0pt] {$\transpose{(\pv_1 - \pv_2)}\cv=0$};
        \draw[opacity=0] (O) -- (H23) node[gray,opacity=1,midway,above,sloped,font=\scriptsize,xshift=-0.85cm,yshift=-2pt] {$\transpose{(\pv_2 - \pv_3)}\cv=0$};
        \draw[opacity=0] (O) -- (H35) node[gray,opacity=1,midway,above,sloped,font=\scriptsize,xshift=0.65cm,yshift=-2pt] {$\transpose{(\pv_3 - \pv_5)}\cv=0$};
        \draw[opacity=0] (O) -- (H15) node[gray,opacity=1,midway,above,sloped,font=\scriptsize,xshift=0.65cm,yshift=-2pt] {$\transpose{(\pv_1 - \pv_5)}\cv=0$};
        
        \draw[densely dashed] (P1) -- (P2) -- (P3) -- (P5) -- cycle;
        \node at (1.25,0.75) {$\convhull(\pset)$};
        
        \fill (P1) circle (1.25pt) node[below] {$\pv_1$};
        \fill (P2) circle (1.25pt) node[below left] {$\pv_2$};
        \fill (P3) circle (1.25pt) node[left] {$\pv_3$};
        \fill (P4) circle (1.25pt) node[above] {$\pv_4$};
        \fill (P5) circle (1.25pt) node[below] {$\pv_5$};

        \node[BurntOrange, above right] at (-2,-2) {$\dualface(\pv_1)$};
        \node[Cerulean, right] at (-2,0.75) {$\dualface(\pv_2)$};
        \node[Red, below] at (-0.5,2) {$\dualface(\pv_3)$};
        \node[ForestGreen, above left] at (2,-2) {$\dualface(\pv_5)$};

    \end{scope}
    \begin{scope}[xshift=0.24\linewidth]
        \node[treenode] (N0) at (0,2cm) {$\transpose{(\pv_1 - \pv_2)}\cv$}
            child {node[treenode] (N1) {$\transpose{(\pv_1 - \pv_5)}\cv$}
                child {node[leafnode] (N3) {$\pv_1$}
                edge from parent node[edge label] {$>0$}}
                child {node[leafnode] (N4) {$\pv_5$}
                edge from parent node[edge label] {$<0$}}
            edge from parent node[edge label] {$>0$}}
            child {node[treenode] (N2) {$\transpose{(\pv_3 - \pv_5)}\cv$}
                child {node[treenode] (N5) {$\transpose{(\pv_2 - \pv_3)}\cv$}
                    child {node[leafnode] (N7) {$\pv_2$}
                    edge from parent node[edge label] {$>0$}}
                    child {node[leafnode] (N8) {$\pv_3$}
                    edge from parent node[edge label] {$<0$}}
                edge from parent node[edge label] {$>0$}}
                child {node[leafnode] (N6) {$\pv_5$}
                edge from parent node[edge label] {$<0$}}
            edge from parent node[edge label] {$<0$}}
            ;
    
        \node[gray, font=\scriptsize, anchor=south, xshift=12pt, yshift=-2pt,xshift=-10pt] at (N0.north east) {$\{\pv_{1},\pv_{2},\pv_{3},\pv_{5}\}$};
        \node[gray, font=\scriptsize, anchor=south, yshift=-2pt, xshift=12pt] at (N1.north west) {$\{\pv_{1},\pv_{5}\}$};
        \node[gray, font=\scriptsize, anchor=south, yshift=-2pt, xshift=-15pt] at (N2.north east) {$\{\pv_{2},\pv_{3},\pv_{5}\}$};
        \node[gray, font=\scriptsize, anchor=south, yshift=-2pt, xshift=12pt] at (N5.north west) {$\{\pv_2,\pv_3\}$};
    
        \node[font=\small] (c) at ($(N0.north)+(0,0.75)$) {Query $\cv \in \kR^2$};
        \draw[-{Stealth}] (c) -- ($(N0.north)+(0,0.05)$);
    \end{scope}
\end{tikzpicture}
    \caption{Left: Two-dimensional feasible set $\pset = \{\pv_1,\dots,\pv_5\}$ and its associated normal fan. Its cones are separated by hyperplanes normal to the faces of $\convhull(\pset)$ and passing through the origin. Each cone $\dualface(\pv_i)$ corresponds to the set of cost vectors for which $\pv_i \in \pset$ is an optimal solution to Problem~\eqref{prob:prob}. Note that $\pv_4 \in \mathrm{int}\convhull(\pset)$, so this feasible solution is never optimal for any cost vector, and its associated cone is empty. Right: \gls{ldt} structure used to locate a queried cost vector within $\costdomain = \kR^{\pdim}$ in the normal fan $\dualfaces(\pset) = \{\dualface(\pv_i)\}_{\pv_i \in \pset}$. Gray annotations indicate candidate solutions at each internal node, and the leaf reached yields an optimal solution to Problem~\eqref{prob:prob}.}
    \label{fig:ldt}
\end{figure}

\subsection{Dynamic-Programming-and-Pruning Algorithm}
\label{sec:dynamic:dynprog}

Among the family of \glspl{ldt} specified by \Cref{def:node-attributes}, those of minimal depth are of particular interest. They ensure, in the worst case, a minimal number of arithmetic operations during their traversal, thereby achieving the best possible query complexity for the associated policy. To construct such a structure, denote by $\nodeheight(\nodesymbol)$ the \emph{minimal height} of a node $\nodesymbol$, that is, the minimal number of branching operations to reach the deepest leaf it can attain. This quantity can be expressed recursively as
\begin{equation}
    \label{eq:minimal-node-depth}
    \nodeheight(\nodesymbol) =
    \begin{cases}
        0 & \text{if } \card{\dualfaces(\nodesymbol)} = 1, \\
        \min_{\divider \in \dividers(\nodesymbol)}
        \max \bigl\{
            \nodeheight(\childlt{\nodesymbol}{\divider}),
            \nodeheight(\childgt{\nodesymbol}{\divider})
        \bigr\} + 1
        & \text{otherwise},
    \end{cases}
\end{equation}
by selecting the divider among the remaining ones yielding the minimal height among its associated children, with an additional unit accounting for the branching operation performed.
If $\card{\dualfaces(\nodesymbol)} = 1$, then node $\nodesymbol$ is a leaf and its minimal height is $\nodeheight(\nodesymbol)=0$.
Since the root node $\noderoot = (\emptyset,\emptyset)$ is common to all \glspl{ldt} characterized by \Cref{def:node-attributes}, evaluating $\nodeheight(\noderoot)$ directly yields the minimal depth achievable among all candidate structures, as well as the branching test that should be selected at each node to attain it. Owing to its recursive definition, the function $\nodeheight$ can be evaluated using a standard dynamic programming procedure.\\

\paragraph{Pruning test}
Because $\card{\dividers(\pset)} < +\infty$, the number of possible nodes to process during the dynamic programming procedure is finite, and the evaluation of $\nodeheight(\noderoot)$ terminates in a finite number of steps. Nevertheless, processing each node $\nodesymbol$ in the recursion requires testing all remaining dividers $\divider \in \dividers(\nodesymbol)$ for the branching operation. This can lead to substantial computational effort, particularly at shallower nodes where $\card{\dividers(\nodesymbol)}$ is large. To reduce the effort of constructing a minimal-depth \gls{ldt}, a pruning mechanism is used to discard nodes that cannot belong to a minimal-depth structure. Specifically, denote by $\nodedepth(\nodesymbol) = \card{\nodelt} + \card{\nodegt}$ the \emph{depth} of a node $\nodesymbol = \nodetuple$, that is, the number of branching operations performed from the root. Then, a node $\nodesymbol$ verifying the pruning condition
\begin{equation}
    \nodedepth(\nodesymbol) + \nodeheight(\nodesymbol) > \nodeheight(\noderoot)
\end{equation}
cannot belong to a minimal-depth \gls{ldt} since the depth of any tree passing through it would exceed the minimal depth achievable among all candidate structures. Any node verifying this pruning condition can then be discarded during the dynamic programming procedure.\\

\paragraph{Pruning bounds}
Although the quantities $\nodeheight(\nodesymbol)$ and $\nodeheight(\noderoot)$ are unknown when processing node $\nodesymbol$, a surrogate pruning condition
\begin{equation}
    \label{eq:pruning}
    \nodedepth(\nodesymbol) + \sampled{\nodeheight}{\text{lb}}(\nodesymbol)
    >
    \nodeheight^{\text{ub}}(\noderoot)
\end{equation}
can still be implemented in practice using a lower bound $\sampled{\nodeheight}{\text{lb}}(\nodesymbol) \leq \nodeheight(\nodesymbol)$ and an upper bound $\nodeheight^{\text{ub}}(\noderoot) \geq \nodeheight(\noderoot)$. A valid lower bound on the minimal height $\nodeheight(\nodesymbol)$ at any node is given by
\begin{equation}
    \label{eq:lower-bound}
    \sampled{\nodeheight}{\text{lb}}(\nodesymbol)
    =
    \ceil{\log_2\bigl(\card{\dualfaces(\nodesymbol)}\bigr)}
\end{equation}
since in the ideal case, each branching operation perfectly separates candidate cones in $\dualfaces(\nodesymbol)$ into two subsets of equal cardinality, until a leaf is reached. Moreover, an upper bound on $\nodeheight(\noderoot)$ can be constructed from the quantity
\begin{equation}
    \label{eq:minimal-node-depth-sampled}
    \sampled{\nodeheight}{\hyperplanenumber}(\nodesymbol)
    =
    \begin{cases}
        0
        & \text{if } \card{\dualfaces(\nodesymbol)} = 1, \\
        \min_{\divider \in \sampled{\dividers}{\hyperplanenumber}(\nodesymbol)}
        \max \bigl\{
            \sampled{\nodeheight}{\hyperplanenumber}(\childlt{\nodesymbol}{\divider}),
            \sampled{\nodeheight}{\hyperplanenumber}(\childgt{\nodesymbol}{\divider})
        \bigr\} + 1
        & \text{otherwise},
    \end{cases}
\end{equation}
where $\sampled{\dividers}{\hyperplanenumber}(\nodesymbol)$ denotes the restriction of the set $\dividers(\nodesymbol)$ to its first $\hyperplanenumber \in \kN$ elements, using the convention that $\sampled{\dividers}{\hyperplanenumber}(\nodesymbol) = \dividers(\nodesymbol)$ when $\hyperplanenumber \geq \card{\dividers(\nodesymbol)}$.
In particular, we have
\begin{equation}
    \label{eq:upper-bound-monotonicity}
    \hyperplanenumber \leq \hyperplanenumber'
    \implies
    \sampled{\dividers}{\hyperplanenumber}(\nodesymbol) \subseteq \sampled{\dividers}{\hyperplanenumber'}(\nodesymbol)
    \implies
    \sampled{\nodeheight}{\hyperplanenumber}(\nodesymbol) \geq \sampled{\nodeheight}{\hyperplanenumber'}(\nodesymbol)
\end{equation}
for all $\nodesymbol$.
Setting $\hyperplanenumber=1$ corresponds to a greedy processing of the node where a single divider is considered for the branching operation, whereas with $\hyperplanenumber=\card{\dividers(\pset)}$ one recovers $\sampled{\nodeheight}{\hyperplanenumber}(\nodesymbol)=\nodeheight(\nodesymbol)$ for all nodes $\nodesymbol$.\\

\paragraph{Iterative construction}
Based on these observations, we propose an iterative construction strategy in which $\sampled{\nodeheight}{\hyperplanenumber}(\noderoot)$ is evaluated recursively from \eqref{eq:minimal-node-depth-sampled} for a sequence of increasing $\hyperplanenumber \in \{\hyperplanenumber_1,\hyperplanenumber_2,\dots,\hyperplanenumber_{\max}\}$.
This produces \glspl{ldt} of non-increasing depth
\begin{equation}
    \label{eq:decreasing-depth}
    \sampled{\nodeheight}{\hyperplanenumber_{1}}(\noderoot)
    \geq
    \sampled{\nodeheight}{\hyperplanenumber_{2}}(\noderoot)
    \geq
    \dots
    \geq
    \sampled{\nodeheight}{\hyperplanenumber_{\max}}(\noderoot)
    ,
\end{equation}
and setting $\hyperplanenumber_{\max} = \card{\dividers(\pset)}$ guarantees that the last one achieves minimal depth. As the iterations progress, more candidate dividers are considered for branching operations in each node processed, expanding the search space. The pruning condition \eqref{eq:pruning} can be applied using the lower bound \eqref{eq:lower-bound} together with the upper bound recovered from \eqref{eq:decreasing-depth}, which is progressively refined over the iterations. Evaluating $\sampled{\nodeheight}{\hyperplanenumber}(\noderoot)$ for small values of $\hyperplanenumber$ is typically inexpensive since only a limited number of dividers are examined for branching operations. For larger values of $\hyperplanenumber$, the evaluation is more costly, but pruning becomes more effective and can potentially offset part of the computational load. Importantly, once a node is pruned for a given value of $\hyperplanenumber$, it remains pruned for all values that are considered subsequently due to Property~\eqref{eq:upper-bound-monotonicity}. Moreover, the sets $\dualfaces(\nodesymbol)$ and $\dividers(\nodesymbol)$ are independent of $\hyperplanenumber$, and therefore need to be constructed only the first time a node $\nodesymbol$ is processed. \Cref{alg:buildtree-sampling} summarizes this iterative construction strategy. Any pruned node is assigned $\nodeheight(\nodesymbol)=+\infty$ to ensure that no divider leading to this node will be selected in subsequent steps.

\begin{algorithm}[htbp]
    \caption{Minimal-depth LDT synthesis}
    \label{alg:buildtree-sampling}
    \algrenewcommand\algorithmicrequire{\textbf{Input:}}
\algrenewcommand\algorithmicensure{\textbf{Output:}}

\begin{algorithmic}[1]

\Require Increasing sequence $\hyperplanenumber \in \{\hyperplanenumber_1,\dots,\hyperplanenumber_{\max}\}$ with $\hyperplanenumber_{\max} = \card{\dividers(\pset)}$
\Ensure Minimal depth achievable $\nodeheight(\noderoot)$ among \gls{ldt} structures

\vspace{0.5em}

\State Initialize $\noderoot = (\emptyset, \emptyset)$ and $\sampled{\nodeheight}{\mathrm{ub}}(\noderoot) = +\infty$

\For{$\hyperplanenumber \in \{\hyperplanenumber_1,\dots,\hyperplanenumber_{\max}\}$}
    \State $\sampled{\nodeheight}{\mathrm{ub}}(\noderoot) \gets \textsc{Evaluate}(\noderoot, \hyperplanenumber, \sampled{\nodeheight}{\mathrm{ub}}(\noderoot))$
\EndFor

\State \Return $\sampled{\nodeheight}{\mathrm{ub}}(\noderoot)$

\vspace{0.5em}

\Function{Evaluate}{$\nodesymbol, \hyperplanenumber, \sampled{\nodeheight}{\mathrm{ub}}(\noderoot)$}

    \If{$\nodeheight(\nodesymbol)$ has been memoized}
        \Comment{Node memoization}
        \State \Return $\nodeheight(\nodesymbol)$
    \EndIf

    \If{$\nodesymbol$ has not been evaluated yet}
        \Comment{Node initialization}
        \State Compute $\dualfaces(\nodesymbol)$ and $\dividers(\nodesymbol)$
    \EndIf

    \vspace{0.5em}

    \If{$\card{\dualfaces(\nodesymbol)} = 1$}
        \Comment{Leaf test}
        \State $\nodeheight(\nodesymbol) \gets 0$
        \State Memoize $\nodeheight(\nodesymbol)$
        \State \Return $\nodeheight(\nodesymbol)$
    \EndIf

    \If{$\nodedepth(\nodesymbol) + \nodeheight^{\mathrm{lb}}(\nodesymbol) > \sampled{\nodeheight}{\mathrm{ub}}(\noderoot)$}
        \Comment{Pruning test}
        \State $\nodeheight(\nodesymbol) \gets +\infty$
        \State Memoize $\nodeheight(\nodesymbol)$
        \State \Return $\nodeheight(\nodesymbol)$
    \EndIf

    \vspace{0.5em}

    \State $\sampled{\nodeheight}{\hyperplanenumber}(\nodesymbol) \gets +\infty$
    \Comment{DP recursion}
    \For{$\divider{}{} \in \sampled{\dividers}{\hyperplanenumber}(\nodesymbol)$}
        \State $\sampled{\nodeheight}{\hyperplanenumber}(\childlt{\nodesymbol}{\divider}) \gets \textsc{Evaluate}(\childlt{\nodesymbol}{\divider}, \hyperplanenumber, \sampled{\nodeheight}{\mathrm{ub}}(\noderoot))$
        \State $\sampled{\nodeheight}{\hyperplanenumber}(\childgt{\nodesymbol}{\divider}) \gets \textsc{Evaluate}(\childgt{\nodesymbol}{\divider}, \hyperplanenumber, \sampled{\nodeheight}{\mathrm{ub}}(\noderoot))$

        \State $\sampled{\nodeheight}{\hyperplanenumber}(\nodesymbol) \gets 
        \min\big\{
            \sampled{\nodeheight}{\hyperplanenumber}(\nodesymbol),\ 
            \max\big(
                \sampled{\nodeheight}{\hyperplanenumber}(\childlt{\nodesymbol}{\divider}),
                \sampled{\nodeheight}{\hyperplanenumber}(\childgt{\nodesymbol}{\divider})
            \big) + 1
        \big\}$
    \EndFor

    \vspace{0.5em}

    \If{$\hyperplanenumber \geq \card{\dividers(\nodesymbol)}$}
        \Comment{Node memoization}
        \State $\nodeheight(\nodesymbol) \gets \sampled{\nodeheight}{\hyperplanenumber}(\nodesymbol)$
        \State Memoize $\nodeheight(\nodesymbol)$
    \EndIf

    \State \Return $\sampled{\nodeheight}{\hyperplanenumber}(\nodesymbol)$

\EndFunction

\end{algorithmic}
\end{algorithm}

\subsection{Acceleration Strategies}
\label{sec:dynamic:acceleration}

Beyond pure arithmetic operations, the main computational effort when evaluating a node $\nodesymbol$ in \Cref{alg:buildtree-sampling} lies in recovering its associated sets $\dualfaces(\nodesymbol)$ and $\dividers(\nodesymbol)$. Computing them from their characterization in \Cref{def:node-attributes} requires checking the feasibility of $\card{\dualfaces} + \card{\dividers}$ linear systems of inequalities. However, since $\noderegion(\nodesymbol) \subseteq \noderegion(\nodesymbol')$ when $\nodesymbol$ is a child of node $\nodesymbol'$ in the \gls{ldt} structure, this effort can be reduced to checking the feasibility of $\card{\dualfaces(\nodesymbol')} + \card{\dividers(\nodesymbol')}$ linear systems by propagating these sets from parent to child nodes.
Using this mechanism, the number of linear system feasibility checks required to recover the set of cones and dividers naturally decreases as deeper nodes in the \gls{ldt} are explored. Nevertheless, the computational load dedicated to these operations can remain substantial. We therefore propose additional acceleration strategies to further reduce the overall computational effort for synthesizing a minimal-depth \gls{ldt} via \Cref{alg:buildtree-sampling}.\\

\paragraph{Inference rules for cones}
When evaluating a divider $\divider \in \dividers(\nodesymbol)$ at a node~$\nodesymbol$, \Cref{def:node-attributes} yields 
\begin{equation}
    \label{eq:intersection-decomposition}
    \noderegion(\nodesymbol') \cap \dualface =
    \left\{
    \begin{array}{ll}
        \noderegion(\nodesymbol) \cap \dualface{} \cap \divider_{<} 
        & \text{for child } \nodesymbol' = \childlt{\nodesymbol}{\divider}, \\
        \noderegion(\nodesymbol) \cap \dualface{} \cap \divider_{>} 
        & \text{for child } \nodesymbol' = \childgt{\nodesymbol}{\divider},
    \end{array}
    \right.
\end{equation}
for any candidate cone $\dualface \in \dualfaces(\nodesymbol)$, where we define $\divider_{\odot} = \kset{\cv \in \kR^{\pdim}}{\divider(\cv) \odot 0}$ for all $\odot \in \{<,\leq,=,\geq,>\}$. Interestingly, the emptiness of the intersections $\dualface \cap \divider_{<}$ and $\dualface \cap \divider_{>}$ in \eqref{eq:intersection-decomposition} does not depend on the node considered. Therefore, by preprocessing the relative position of all cones in $\dualfaces(\pset)$ with respect to all dividers in $\dividers(\pset)$, one can infer whether a cone $\dualface \in \dualfaces(\nodesymbol)$ must remain a candidate in the children $\childlt{\nodesymbol}{\divider}$ and $\childgt{\nodesymbol}{\divider}$ of node $\nodesymbol$ or can be discarded.
More precisely, the following result holds.
\begin{proposition}
    \label{prop:filtering-rules}
    We have
    \begin{subequations}
        \begin{align}
            \label{eq:filtering-rules-leq}
            \dualface{} \subseteq \divider_{\leq} &\implies \noderegion(\childlt{\nodesymbol}{\divider}) \cap \dualface{} \neq \emptyset \quad \text{and} \quad \noderegion(\childgt{\nodesymbol}{\divider}) \cap \dualface{} = \emptyset \\
            \label{eq:filtering-rules-geq}
            \dualface{} \subseteq \divider_{\geq} &\implies \noderegion(\childlt{\nodesymbol}{\divider}) \cap \dualface{} = \emptyset \quad \text{and} \quad \noderegion(\childgt{\nodesymbol}{\divider}) \cap \dualface{} \neq \emptyset
        \end{align}
    \end{subequations}
    for any node $\nodesymbol \neq \nodesymbol_0$, candidate cone $\dualface \in \dualfaces(\nodesymbol)$, and divider $\divider \in \dividers(\nodesymbol)$. If $\costdomain$ is open, this result also holds for the root node $\nodesymbol = \nodesymbol_0$.
\end{proposition}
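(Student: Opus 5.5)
The second conclusion in each implication is immediate, and the first I plan to get by a perturbation argument that pushes a witness point into the interior of the cone. I treat \eqref{eq:filtering-rules-leq}; \eqref{eq:filtering-rules-geq} follows by the symmetric argument with $\divider$ replaced by $-\divider$.

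\emph{The emptiness claim.} By \Cref{def:node-attributes}, $\noderegion(\childgt{\nodesymbol}{\divider}) = \noderegion(\nodesymbol) \cap \divider_{>}$. Since $\dualface \subseteq \divider_{\leq}$, we have $\dualface \cap \divider_{>} = \emptyset$, so $\noderegion(\childgt{\nodesymbol}{\divider}) \cap \dualface = \emptyset$. This holds for any node, including the root.

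\emph{The non-emptiness claim.} Since $\dualface \in \dualfaces(\nodesymbol)$, there is a point $\cv \in \noderegion(\nodesymbol) \cap \dualface$. I must produce a point of $\noderegion(\nodesymbol) \cap \dualface \cap \divider_{<}$. I will use two geometric facts.

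First, each cone $\dualface = \dualface(\pv)$ with $\pv \in \mathcal{V}(\pset)$ is a full-dimensional closed convex cone, possibly after quotienting by the lineality space, as in the footnote of \Cref{sec:existence:polynomial}. Hence $\dualface = \overline{\operatorname{int}\dualface}$, and every point of $\dualface$ is a limit of interior points of $\dualface$.

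Second, $\divider$ is a nonzero linear form, because $\pv \neq \tilde{\pv}$. Therefore $\operatorname{int}\dualface \subseteq \operatorname{int}\divider_{\leq} = \divider_{<}$.

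So it suffices to find an interior point of $\dualface$ inside $\noderegion(\nodesymbol)$. If $\noderegion(\nodesymbol)$ is open, it contains a ball around $\cv$, and that ball meets $\operatorname{int}\dualface$ because $\cv \in \overline{\operatorname{int}\dualface}$. Any point $\cv'$ in this intersection satisfies $\cv' \in \noderegion(\nodesymbol) \cap \dualface$ and $\divider(\cv') < 0$. Hence $\noderegion(\childlt{\nodesymbol}{\divider}) \cap \dualface \neq \emptyset$.

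\emph{Openness of $\noderegion(\nodesymbol)$.} This is the step I expect to be the main obstacle.
\begin{itemize}
\item At the root, $\noderegion(\noderoot) = \costdomain$, so openness is exactly the stated hypothesis. This explains why the root case requires $\costdomain$ open. Without it, $\cv$ could lie on a lower-dimensional $\costdomain$ inside the hyperplane $\divider_{=}$, and the claim fails.
\item For a node $\nodesymbol \neq \noderoot$, $\noderegion(\nodesymbol)$ is $\costdomain$ intersected with a finite family of open half-spaces $\divider'_{<}$ and $\divider'_{>}$, which is a non-empty open set. I would therefore carry out the perturbation inside this open polyhedral set. If $\costdomain$ is not open, I first try to show that the $\cv$ found can be taken in the interior of $\costdomain$ relative to this open set. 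Failing that, I would make explicit the standing convention that the analysis is carried out with $\costdomain$ full-dimensional, so that only points in the interior matter.
\end{itemize}
The precise handling of this point, namely why at least one strict divider constraint suffices, is where I expect the care to be needed.
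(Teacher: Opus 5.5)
Your emptiness argument and your perturbation step follow the paper's route: openness of $\noderegion(\nodesymbol)$, together with closedness and full-dimensionality of $\dualface$. Your version of the key step is actually the careful one. A ball around $\cv$ inside $\noderegion(\nodesymbol)$ meets $\operatorname{int}\dualface\subseteq\divider_{<}$. The paper instead asserts that $\noderegion(\nodesymbol)\cap\dualface$ is open, which is false as stated; it merely has nonempty interior, and that is all that is needed. No quotienting is required either: vertex cones of the normal fan are always full-dimensional in $\kR^{\pdim}$, since they contain the lineality space rather than being confined to a complement of it. So whenever $\costdomain$ is open, your proof is complete at every node.

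The genuine gap is the step you flagged, and it cannot be closed. Under \Cref{def:node-attributes} one has $\noderegion(\nodesymbol)\subseteq\costdomain$. Hence your claim that $\costdomain$ intersected with finitely many open half-spaces is open fails unless $\costdomain$ is open. Strict divider constraints never remove boundary points of $\costdomain$, so the obstruction you described at the root (a witness forced into $\divider_{=}$ by $\costdomain$) reappears at any depth.

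Here is a concrete counterexample. Take $\pset=\{0,1\}^2$ and the closed, full-dimensional domain $\costdomain=\kR_+^2$, the type used for \textsc{Knp}. Then $\dualface=\dualface((0,1))=\{\cv: c_1\leq 0\leq c_2\}$. Let $\divider'(\cv)=c_2$ be the divider of $(1,1)$ and $(1,0)$, let $\nodesymbol=(\emptyset,\{\divider'\})\neq\noderoot$, and let $\divider(\cv)=-c_1$ be the divider of $(0,1)$ and $(1,1)$.
\begin{itemize}
\item The region $\noderegion(\nodesymbol)=\{\cv: c_1\geq 0,\ c_2>0\}$ meets both $\dualface$ and $\divider_{=}$ exactly in $\{\cv: c_1=0,\ c_2>0\}$.
\item Hence $\dualface\in\dualfaces(\nodesymbol)$ and $\divider\in\dividers(\nodesymbol)$, and also $\dualface\subseteq\divider_{\geq}$.
\item Yet $\noderegion(\childgt{\nodesymbol}{\divider})=\{\cv: c_1\geq 0,\ c_2>0,\ c_1<0\}=\emptyset$, which violates \eqref{eq:filtering-rules-geq}.
\end{itemize}
Neither of your fallbacks helps here. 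Every point of $\noderegion(\nodesymbol)\cap\dualface$ lies on the boundary of $\costdomain$, and $\costdomain$ is already full-dimensional.

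The paper's proof has the same hole: it simply declares $\noderegion(\nodesymbol)$ open because $\nodesymbol\neq\noderoot$. What is true, and what your argument proves, is the proposition for all nodes under the hypothesis that $\costdomain$ is open. Alternatively, it holds with node regions defined by the strict divider constraints alone. The root/non-root distinction in the statement is spurious.
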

\begin{proof}
    Consider the child node $\childgt{\nodesymbol}{\divider}$. On the one hand, if $\dualface \subseteq \divider_{\leq}$, then $\dualface \cap \divider_{>} = \emptyset$. It follows from \eqref{eq:intersection-decomposition} that $\noderegion(\childgt{\nodesymbol}{\divider}) \cap \dualface{} = \emptyset$. On the other hand, if $\dualface{} \subseteq \divider_{\geq}$, then
    \begin{subequations}
        \begin{align}
            \noderegion(\childgt{\nodesymbol}{\divider}) \cap \dualface{} &= \noderegion(\nodesymbol) \cap \dualface{} \cap \divider_{>} \\
            &= \noderegion(\nodesymbol) \cap \dualface{} \cap \divider_{\geq} \setminus \divider_{=} \\
            &= \noderegion(\nodesymbol) \cap \dualface{} \setminus \divider_{=}
            .
        \end{align}
    \end{subequations}
    Moreover, we have $\noderegion(\nodesymbol) \cap \dualface{} \neq \emptyset$ since $\dualface \in \dualfaces(\nodesymbol)$, the region $\noderegion(\nodesymbol)$ is open by \Cref{def:node-attributes} since either $\nodesymbol \neq \nodesymbol_0$ or $\nodesymbol = \nodesymbol_0$ and that $\costdomain$ is open, and $\dualface{}$ is full-dimensional and closed by definition \eqref{eq:normal-fan-cone}. Hence, the intersection $\noderegion(\nodesymbol) \cap \dualface{}$ is a non-empty open subset of $\mathbb{R}^{\pdim}$. Since $\divider_{=}$ is a linear subspace, it cannot cover $\noderegion(\nodesymbol) \cap \dualface{}$ entirely. We conclude that $\noderegion(\nodesymbol) \cap \dualface{} \setminus \divider_{=} \neq \emptyset$, which yields $\noderegion(\childgt{\nodesymbol}{\divider}) \cap \dualface{} \neq \emptyset$.
    The reasoning for the other child node $\childlt{\nodesymbol}{\divider}$ holds symmetrically.
\end{proof}
Using the inference rules provided by \Cref{prop:filtering-rules}, some of the candidate cones associated with the children of the node $\nodesymbol$ can be determined at no cost, without explicitly testing the emptiness of the intersections $\noderegion(\childlt{\nodesymbol}{\divider}) \cap \dualface$ and $\noderegion(\childgt{\nodesymbol}{\divider}) \cap \dualface$ via linear system feasibility checks. In particular, only the cones in $\dualfaces(\nodesymbol)$ that straddle the divider $\divider \in \dividers(\nodesymbol)$ selected for branching need to be checked explicitly to determine whether they remain candidates in the child nodes.\\

\paragraph{Inference rules for dividers}
Similarly, determining whether some $\divider \in \dividers(\nodesymbol)$ remains a divider in the children $\childlt{\nodesymbol}{\divider}$ and $\childgt{\nodesymbol}{\divider}$ of node $\nodesymbol$ further reduces the number of feasibility checks performed in \Cref{alg:buildtree-sampling}. To derive such inference rules when branching on some divider $\divider \in \dividers(\pset)$, we observe that if
\begin{equation}
    \label{eq:inference-nonempty}
    \dualfaces(\childlt{\nodesymbol}{\divider}) \neq \emptyset
    \quad\text{and}\quad
    \dualfaces(\childgt{\nodesymbol}{\divider}) \neq \emptyset,
\end{equation}
then at least one cone remains in both children of node $\nodesymbol$, implying that $\noderegion(\nodesymbol) \cap \divider \neq \emptyset$. It then follows from \Cref{def:node-attributes} that $\divider \in \dividers(\nodesymbol)$.
Conversely, if
\begin{equation}
    \label{eq:inference-empty}
    \dualfaces(\childlt{\nodesymbol}{\divider})
    =
    \emptyset
    \quad\text{or}\quad
    \dualfaces(\childgt{\nodesymbol}{\divider})
    =
    \emptyset,
\end{equation}
then all cones in $\dualfaces(\nodesymbol)$ lie on the same side of this divider, which gives $\noderegion(\nodesymbol) \cap \divider = \emptyset$ and implies that $\divider \notin \dividers(\nodesymbol)$ from \Cref{def:node-attributes}. Using these rules, part of the set $\dividers(\nodesymbol)$ can be quickly determined without checking the emptiness of $\noderegion(\nodesymbol) \cap \divider$ through a linear feasibility problem. This operation requires the knowledge of candidate cones $\dualfaces(\childlt{\nodesymbol}{\divider})$ and $\dualfaces(\childgt{\nodesymbol}{\divider})$ associated with the children of node $\nodesymbol$. Since these will be needed later anyway in \Cref{alg:buildtree-sampling} whenever $\divider \in \dividers(\nodesymbol)$, we compute and cache them in advance, leveraging the cone inference rules. The cones associated with these children will then be directly available when processing them for the first time.\\

\paragraph{Sorting node dividers}
When evaluating a node $\nodesymbol$, the candidate dividers in $\dividers(\nodesymbol)$ can be ordered to prioritize the most promising branching decisions during the iterative construction of \Cref{alg:buildtree-sampling}.
In particular, dividers that induce a more balanced partition of the cones in $\dualfaces(\nodesymbol)$ across the child nodes can be favored, as they are more likely to lead quickly to leaf nodes.
This ordering can be based on a function $\kfuncdef{\discrepfunc{\nodesymbol}}{\dividers(\nodesymbol)}{\kR}$ assigning a \emph{discrepancy score} that measures how balanced the partition of cones in $\dualfaces(\nodesymbol)$ induced by a divider is.
Several discrepancy measures from the literature may be used \citep[Table~1]{jost2006entropy}.
We propose to rely on the variance-based measure
\begin{equation}
    \label{eq:discrepancy-variance}
    \discrepfunc{\nodesymbol}(\divider)
    =
    \left(
        \card{\dualfaces(\childlt{\nodesymbol}{\divider})}
        -
        \frac{1}{2}\card{\dualfaces(\nodesymbol)}
    \right)^2
    +
    \left(
        \card{\dualfaces(\childgt{\nodesymbol}{\divider})}
        -
        \frac{1}{2}\card{\dualfaces(\nodesymbol)}
    \right)^2
    ,
\end{equation}
which captures the deviation from a perfectly balanced partition of the cones between the two child nodes.\footnote{Note that the sets $\dualfaces(\childlt{\nodesymbol}{\divider})$ and $\dualfaces(\childgt{\nodesymbol}{\divider})$ are not disjoint in general.}
The candidate cones $\dualfaces(\childlt{\nodesymbol}{\divider})$ and $\dualfaces(\childgt{\nodesymbol}{\divider})$ of the children required to compute this score are readily available when performing this sorting operation at node $\nodesymbol$, as they are already constructed and cached to evaluate divider inference rules.

\section{Experimental Analyses}
\label{sec:numerics}

In the previous sections, we have presented a generic family of \gls{ldt} policies for \glspl{ilp} along with a practical procedure to construct them. We analyze the insights into the inherent complexity of \glspl{ilp} provided by their associated \gls{ldt} policies, assess the efficiency of evaluating these policies relative to standard optimization methods, and examine their construction cost.

\subsection{Experimental Setup}
\label{sec:numerics:instances}

We conduct our experiments on several canonical classes of \glspl{ilp}, each characterized by a fixed feasible-set structure and by a parameter $\sdim \in \kN$ driving the problem dimension. For instances involving minimization instead of maximization, we negate the cost vector to match the framework of Problem~\eqref{prob:prob}.
The selected classes are as follows:
\begin{itemize}
    \item \textsc{Knp}: Knapsack problems seeking a packing of $\sdim \in \kN$ items with weights $\knpweight \in \kR_+^{\sdim}$ of maximum value by solving Problem~\eqref{prob:prob} with
    \begin{equation*}
        \pset = \left\{\pv \in \{0,1\}^{\pdim} \mid \textstyle \transpose{\knpweight}\pv \leq \knpcapacity \right\}
    \end{equation*}
    where $\pdim = \sdim$, the value $\knpcapacity > 0$ denotes a maximum weight capacity, and the cost vector in the domain $\costdomain = \kR_+^{\sdim}$ encodes the value of each item.
    We consider specific instances where $\knpweight = \{1,\dots,\sdim\}$ and $\knpcapacity = \sdim$ to obtain a fixed feasible set for each value of parameter $\sdim \in \kN$, for which the number of feasible solutions scales as $\card{\pset} \simeq \bigO\big(\sdim^{-3/4}\exp(\pi\sqrt{\sdim/3})\big)$.\footnote{See \url{https://oeis.org/A026906}.}\\
    \item \textsc{Tsp}: Traveling salesman problems seeking a Hamiltonian cycle of minimum length over vertices $\mathcal{G}_V = \{1,\dots,\sdim\}$ in an undirected complete graph with edges $\mathcal{G}_E~=~\kset{(i,j)}{1 \leq i < j \leq \sdim}$ by solving Problem~\eqref{prob:prob} with feasible set
    \begin{equation*}
        \pset = \left\{
            \pv \in \{0,1\}^{\pdim}
            \left|
            \begin{array}{ll}
                \sum_{(i,j) \in \mathcal{G}_E}\pvi{ij} = 2 & \forall i \in \mathcal{G}_V \\
                \sum_{(i,j) \in \mathcal{G}_E \cap \mathcal{S}} \pvi{ij} \leq \card{\mathcal{S}}-1 & \forall \mathcal{S} \subsetneq \mathcal{G}_V, \card{\mathcal{S}} \geq 2
            \end{array}
            \right.
        \right\}
    \end{equation*}
    with $\pdim = \card{\mathcal{G}_E}$, and where the cost vector in the domain $\costdomain = \kR_-^{\pdim}$ encodes the opposite of the distance between each pair of vertices in the graph, with negative-infinite values to model instances with infeasible connections. For this class, the number of feasible solutions scales as $\card{\pset} = \tfrac{1}{2}(\sdim - 1)!$.\\
    \item \textsc{Cut}: Cut problems seeking a partition of vertices $\mathcal{G}_V = \{1,\dots,\sdim\}$ in an undirected complete graph with edges $\mathcal{G}_E = \kset{(i,j)}{1 \leq i < j \leq \sdim}$ of minimum cut weight by solving Problem~\eqref{prob:prob} with the feasible set
    \begin{equation*}
        \pset = 
        \left\{\pv \in \{0,1\}^{\pdim}
            \left|
            \exists \mathcal{S} \subseteq \mathcal{G}_V, \ \pvi{ij} = 1 \iff (i \in \mathcal{S},j \notin \mathcal{S})~\text{or}~(i \notin \mathcal{S},j \in \mathcal{S})
            \right.
        \right\}
    \end{equation*}
    with $\pdim = \card{\mathcal{G}_E}$, and where the cost vector in the domain $\costdomain = \kR_-^{\pdim}$ encodes the opposite of the weight of each edge in the graph, with negative-infinite values to model instances with infeasible connections. For this class, the number of feasible solutions scales as $\card{\pset} = 2^{\sdim-1}$.\\
\end{itemize}
Throughout these experiments, we assume that the feasible set in Problem~\eqref{prob:prob} is known explicitly.
Unless stated otherwise, we use the sequence $\hyperplanenumber \in \{1,2,\dots,\card{\dividers(\pset)}\}$ for the iterative \gls{ldt} construction method described in \Cref{sec:dynamic:dynprog} and enable all the acceleration strategies proposed in \Cref{sec:dynamic:acceleration}.
During the construction of the normal fan $\dualfaces(\pset)$ and the computation of candidate cones and dividers in \Cref{alg:buildtree-sampling}, the feasibility of linear systems is checked using \textsc{Gurobi}~\citep{gurobi}.
The construction of $\convhull(\pset)$ is performed using \textsc{Cddlib} \citep{fukuda2003cddlib}.
For reproducibility, our implementation is available as open-source.\footnote{\texttt{\url{https://github.com/TheoGuyard/treeco}}} All computations are run on a single-core AMD 9654 CPU clocked at 2.40 GHz with 256 GB of RAM.

\subsection{Policy Structure and Complexity Characterization}
\label{sec:numerics:structure}

We first study the structural properties of two policy designs for \glspl{ilp}:
\begin{enumerate}[label=(\roman*)]
    \item A convex-hull-based policy as presented in~\citep{seidel1991small} where $\convhull(\pset)$ is built during the construction step, and Problem~\eqref{prob:prob} is solved during the evaluation step via its perfect linear relaxation $\max_{\pv \in \convhull(\pset)} \transpose{\cv}\pv$ using the simplex method.
    \item The proposed \gls{ldt} policy where a minimal-depth \gls{ldt} is built during the construction step as described in \Cref{sec:dynamic}, and Problem~\eqref{prob:prob} is solved during the evaluation step by traversing this \gls{ldt} structure.
\end{enumerate}
~
\paragraph{Geometrical analysis}
The middle part of \Cref{tab:numerics-complexity} reports geometric measures relevant to each approach. For the convex-hull policy, we report the number of vertices and facets of $\convhull(\pset)$. For the \gls{ldt} policy, we report the number of cones and dividers in $\dualfaces(\pset)$, together with the number of distinct dividers obtained after removing co-linear duplicates. These quantities provide insight into evaluation complexity. For the convex-hull policy, a larger number of vertices and facets in $\convhull(\pset)$ typically leads to more simplex pivots during evaluation. For the \gls{ldt} policy, a larger number of cones and dividers in $\dualfaces(\pset)$ induces a more intricate partition of the cost-vector space, and therefore potentially a deeper \gls{ldt} to traverse at query time. Interestingly, we observe that accounting for co-linear duplicates significantly reduces the number of dividers in $\dualfaces(\pset)$ for the \textsc{Knp} and \textsc{Tsp} instances. Only one representative of each class of co-linear dividers needs to be retained when building the \gls{ldt} policy, since such dividers carry the same information for locating the normal-fan cone containing the queried cost vector. No co-linear duplicates are observed for the \textsc{Cut} class. \\

\paragraph{Evaluation complexity}
To complement the qualitative insights provided by the structures of $\convhull(\pset)$ and $\dualfaces(\pset)$, the rightmost part of \Cref{tab:numerics-complexity} reports the maximum, average, and minimum numbers of linear tests of the form $\transpose{\mathbf{a}}\cv \leq b$ required at query time. For the convex hull policy, this corresponds to the number of possible pivots tested by the simplex method, provided that it is initialized directly at the optimal vertex of $\convhull(\pset)$. This assumption is highly optimistic in practice, and so are the metrics reported for this policy. For the \gls{ldt} policy, it corresponds to the depth of the leaf reached by the queried cost vector. Across all instances for which construction could be completed, the \gls{ldt} policy exhibits a smaller worst-case number of linear tests performed at query time than the best-case of the convex hull policy. In other words, it is guaranteed to solve Problem~\eqref{prob:prob} using fewer linear tests for \emph{every queried cost vector}, even under the optimistic assumption that the simplex method is initialized at the optimal vertex of $\convhull(\pset)$. This gives strong evidence that the proposed \gls{ldt} policy can have practical uses. This point is further validated empirically in \Cref{sec:numerics:evaluation}. Even when only a valid \gls{ldt} policy could be constructed, rather than a minimum-depth one, it still outperforms the convex hull policy in terms of the maximum, average, and minimum linear tests performed. As an example, the \gls{ldt} policy requires at most 22 linear tests to identify an optimal solution to the \textsc{Tsp}(6) instance among its 60 feasible solutions. In contrast, the convex hull policy requires at least 41 linear tests, even when initialized at an optimal vertex of $\convhull(\pset)$.\\

Beyond complexity characterization, \Cref{tab:numerics-complexity} also highlights the computational effort required to construct \gls{ldt} policies, which echoes \Cref{prop:bounded-ldt}. Within the prescribed time and memory budget, construction could not be completed for instances beyond $\textsc{Knp}(16)$, $\textsc{Cut}(7)$, and $\textsc{Tsp}(6)$. However, the convex hull policy could not be constructed beyond $\textsc{Cut}(7)$ and $\textsc{Tsp}(7)$ either, indicating that \gls{ilp} policy construction is a complicated task in general. This aspect is further investigated in \Cref{sec:numerics:construction}.

\begin{sidewaystable}[htbp]
    \centering
    \setlength{\tabcolsep}{6pt}
    \small
    \begin{tabular}{cr||rr|rr|rrr||rrr|rrr}
    \toprule
    \multicolumn{2}{c||}{Instance} &
    \multicolumn{2}{c|}{Feas. set} &
    \multicolumn{2}{c|}{Convex hull} &
    \multicolumn{3}{c||}{Normal fan} & 
    \multicolumn{3}{c|}{Simplex pivots} &
    \multicolumn{3}{c}{\gls{ldt} depth}
    \\
    Class & $\sdim$ & $\pdim$ & $\card{\pset}$ 
    & Vertices & Facets
    & Cones & Dividers & Ind. div.
    & Max & Avg & Min
    & Max & Avg & Min
    \\
    \midrule
    \textsc{Knp} & 2 & 2 & 3
    & 3 & 3
    & 3 & 3 & 3 
    & 2 & 2.00 & 2 
    & 1 & 1.00 & 1
    \\
    \textsc{Knp} & 3 & 3 & 5
    & 5 & 5
    & 5 & 8 & 6 
    & 4 & 3.20 & 3 
    & 1 & 1.00 & 1
    \\
    \textsc{Knp} & 4 & 4 & 7
    & 7 & 6
    & 7 & 15 & 10
    & 6 & 4.29 & 4 
    & 2 & 2.00 & 2
    \\
    \textsc{Knp} & 5 & 5 & 10
    & 10 & 9
    & 10 & 30 & 20
    & 9 & 6.00 & 5 
    & 4 & 4.00 & 4
    \\
    \textsc{Knp} & 6 & 6 & 14
    & 14 & 10
    & 14 & 51 & 30
    & 13 & 7.29 & 6
    & 4 & 4.00 & 4 
    \\
    \textsc{Knp} & 7 & 7 & 19
    & 19 & 15 
    & 19 & 89 & 53 
    & 18 & 9.36 & 7
    & 6 & 6.00 & 6  
    \\
    \textsc{Knp} & 8 & 8 & 25
    & 25 & 17
    & 25 & 137 & 75 
    & 24 & 10.96 & 8
    & 8 & 7.67 & 7  
    \\
    \textsc{Knp} & 9 & 9 & 33
    & 33 & 26
    & 33 & 226 & 123 
    & 32 & 13.70 & 9 
    & ${}^\ast$10 & ${}^\ast$9.52 & ${}^\ast$8 
    \\
    \textsc{Knp} & 10 & 10 & 43
    & 43 & 35
    & 43 & 339 & 176 
    & 42 & 15.76 & 10 
    & ${}^\ast$11 & ${}^\ast$10.65 & ${}^\ast$9 
    \\
    \textsc{Knp} & 11 & 11 & 55
    & 55 & 55 
    & 55 & 515 & 259 
    & 54 & 18.72 & 11 
    & ${}^\ast$13 & ${}^\ast$12.52 & ${}^\ast$11 
    \\
    \textsc{Knp} & 12 & 12 & 70
    & 70 & 64 
    & 70 & 752 & 361 
    & 69 & 21.49 & 12 
    & ${}^\ast$16 & ${}^\ast$14.63 & ${}^\ast$12 
    \\
    \textsc{Knp} & 13 & 13 & 88
    & 88 & 95 
    & 88 & 1,100 & 518 
    & 87 & 25.00 & 13 
    & ${}^\ast$18 & ${}^\ast$16.52 & ${}^\ast$12 
    \\
    \textsc{Knp} & 14 & 14 & 110
    & 110 & 130 
    & 110 & 1,558 & 707 
    & 109 & 28.33 & 14 
    & ${}^\ast$21 & ${}^\ast$18.24 & ${}^\ast$14 
    \\
    \textsc{Knp} & 15 & 15 & 137
    & 137 & 218 
    & 137 & 2,243 & 990
    & 136 & 32.74 & 15 
    & ${}^\ast$24 & ${}^\ast$20.29 & ${}^\ast$15 
    \\
    \textsc{Knp} & 16 & 16 & 169
    & 169 & 301
    & 169 & 3,102 & 1,317 
    & 168 & 36.71 & 16 
    & ${}^\ast$27 & ${}^\ast$22.41 & ${}^\ast$16 
    \\
    \textsc{Knp} & 17 & 17 & 207
    & 207 & 467
    & -- & -- & -- 
    & 206 & 41.51 & 17 
    & -- & -- & -- 
    \\
    \textsc{Knp} & 18 & 18 & 253
    & 253 & 727
    & -- & -- & -- 
    & 252 & 46.67 & 18 
    & -- & -- & -- 
    \\
    \textsc{Knp} & 19 & 19 & 307
    & 307 & 1,127 
    & -- & -- & -- 
    & 306 & 52.21 & 19 
    & -- & -- & -- 
    \\
    \textsc{Knp} & 20 & 20 & 371
    & 371 & 1,672 
    & -- & -- & -- 
    & 370 & 58.11 & 20 
    & -- & -- & -- 
    \\
    \midrule
    \textsc{Cut} & 3 & 3 & 4
    & 4 & 4 
    & 4 & 6 & 6 
    & 3 & 3.00 & 3
    & 2 & 2.00 & 2  
    \\
    \textsc{Cut} & 4 & 6 & 8
    & 8 & 7 
    & 8 & 21 & 21 
    & 7 & 6.00 & 6 
    & 6 & 4.81 & 3 
    \\
    \textsc{Cut} & 5 & 10 & 16
    & 16 & 68 
    & 16 & 105 & 105 
    & 15 & 14.00 & 14
    & 10 & 10.61 & 5 
    \\ 
    \textsc{Cut} & 6 & 15 & 32
    & 32 & 693 
    & 32 & 496 & 496 
    & 31 & 30.00 & 30
    & ${}^\ast$28 & ${}^\ast$21.87 & ${}^\ast$7  
    \\
    \textsc{Cut} & 7 & 21 & 64
    & 64 & 121,467 
    & 64 & 1,953 & 1,953
    & 63 & 62.00 & 62 
    & ${}^\ast$55 & ${}^\ast$29.03 & ${}^\ast$10 
    \\
    \textsc{Cut} & 8 & 28 & 128
    & -- & --
    & -- & -- & -- 
    & -- & -- & --
    & -- & -- & --
    \\
    \midrule
    \textsc{Tsp} & 4 & 6 & 3
    & 3 & 7
    & 3 & 3 & 3 
    & 3 & 2.00 & 2
    & 2 & 2.00 & 2  
    \\
    \textsc{Tsp} & 5 & 10 & 12
    & 12 & 25 
    & 12 & 60 & 30 
    & 12 & 10.00 & 10
    & 8 & 7.00 & 5  
    \\
    \textsc{Tsp} & 6 & 15 & 60
    & 60 & 106
    & 60 & 1,230 & 555 
    & 60 & 41.00 & 41 
    & ${}^\ast$22 & ${}^\ast$17.60 & ${}^\ast$9 
    \\
    \textsc{Tsp} & 7 & 21 & 360
    & 360 & 3,444
    & 360 & 30,240 & 9,660 
    & 360 & 168.00 & 168 
    & -- & -- & -- 
    \\
    \textsc{Tsp} & 8 & 28 & 2,520
    & -- & -- 
    & -- & -- & -- 
    & -- & -- & --
    & -- & -- & -- 
    \\
    \bottomrule
\end{tabular}
    \caption{Geometric and evaluation complexity measures for convex hull and \gls{ldt} policies. The sign `--' indicates that a policy could not be built under the 12-hour time limit. The sign `${}^*$' indicates that the iterative construction procedure for the \gls{ldt} policy did not run until completion, but still returned a policy satisfying \eqref{eq:policy-design} with no guarantees of minimal depth.}
    \label{tab:numerics-complexity}
\end{sidewaystable}

\subsection{Practical Evaluation Performance}
\label{sec:numerics:evaluation}

We now assess the practical performance of \gls{ldt} policies for recovering an optimal solution to Problem~\eqref{prob:prob}. \Cref{fig:numerics-evaluation} reports the average time required for this evaluation step and compares it with:
\begin{itemize}
    \item A straightforward brute-force approach, which evaluates all feasible solutions in $\pset$ to identify an optimal one.
    \item A direct optimization baseline tailored to each problem class: Bellman's algorithm for \textsc{Knp} \citep{toth1980dynamic}, Stoer--Wagner's algorithm for \textsc{Cut} \citep{stoer1997simple}, and Held--Karp's algorithm for \textsc{Tsp} \citep{held1970traveling}. These methods perform well on small- to moderate-sized instances, but must be run from scratch for each new cost vector.
    \item The convex hull policy considered in \Cref{sec:numerics:structure}, which solves the perfect linear relaxation of Problem~\eqref{prob:prob} over $\convhull(\pset)$ using \textsc{Gurobi}. The time required to build $\convhull(\pset)$ and instantiate the \textsc{Gurobi} model is excluded.
\end{itemize}
The reported results are averaged over 1000 cost vectors sampled uniformly at random from the unit ball, intersected with the corresponding cost domain of each \gls{ilp} class described in \Cref{sec:numerics:instances}. To obtain reliable timings and avoid biases caused by run times close to clock resolution, all methods were implemented in C/C++, and cost vectors were generated beforehand.\\

\begin{figure}[htbp]
    \centering
    \def\solvers{
    bruteforce/Brute force,
    baseline/Optim. baseline,
    cvhull/Convex hull policy,
    ldtree/LDT policy,
    ldtree-subopt/LDT policy (suboptimal)%
}

\pgfplotscreateplotcyclelist{cycle_evaluation}{
    {Red, very thick, mark=*, mark options={scale=0.33}},
    {Orange, very thick, mark=*, mark options={scale=0.33}},
    {Cerulean, very thick, mark=*, mark options={scale=0.33}},
    {Blue, very thick, mark=*, mark options={scale=0.33}},
    {Blue, thick, mark=*, mark options={scale=0.33, solid}, densely dotted},
}

\pgfplotstableread[col sep=comma]{data/knp_query.csv}\knpquery
\pgfplotstableread[col sep=comma]{data/cut_query.csv}\cutquery
\pgfplotstableread[col sep=comma]{data/tsp_query.csv}\tspquery

\begin{tikzpicture}
    \begin{groupplot}[
        group style={
            group size=3 by 1,
            horizontal sep=1.25cm,
        },
        width=4.4cm,
        height=4.4cm,
        ymode=log,
        grid=both,
        minor grid style={gray!25},
        major grid style={gray},
        cycle list name=cycle_evaluation,
    ]

        \nextgroupplot[
            title={\textsc{Knp} class},
            ylabel={Evaluation time},
            xlabel={Parameter $\sdim$},
            xmin=-1,
            xtick={0,5,10,15,20},
            ytick={0.00000001,0.000001,0.0001,0.01},
            yminorticks=false,
            yminorgrids=false,
            legend to name=legend_evaluation,
            legend columns=-1,
            legend style={/tikz/every even column/.append style={column sep=0.25cm}},
        ]

        \foreach \solver/\solverlabel in \solvers {
            \addplot table[
                x=dims,
                y=\solver
            ]{\knpquery};
            \ifthenelse{\equal{\solver}{ldtree-subopt}}{}{
                \addlegendentryexpanded{\solverlabel}
            }
        }

        \nextgroupplot[
            title={\textsc{Cut} class},
            xlabel={Parameter $\sdim$},
            xtick={3,4,5,6,7,8},
            ytick={0.00000001,0.000001,0.0001,0.01},
            yminorticks=false,
            yminorgrids=false,
        ]
        
        \foreach \solver/\solverlabel in \solvers {
            \addplot table[x=dims, y=\solver, restrict x to domain=0:8] {\cutquery};
        }

        \coordinate (top) at (rel axis cs:0.5,1);

        \nextgroupplot[
            title={\textsc{Tsp} class},
            xlabel={Parameter $\sdim$},
            xtick={3,4,5,6,7,8},
            ytick={0.00000001,0.000001,0.0001,0.01},
            yminorticks=false,
            yminorgrids=false,
        ]

        \foreach \solver/\solverlabel in \solvers {
            \addplot table[x=dims, y=\solver, restrict x to domain=0:8] {\tspquery};
        }

    \end{groupplot}

    \node[xshift=-0.5cm,yshift=1.25cm,font=\small] at (top.north) {\ref{legend_evaluation}};
\end{tikzpicture}
    \caption{Average evaluation time for solving the selected \gls{ilp} classes as a function of the parameter $\sdim \in \kN$. Evaluation times for the convex hull and \gls{ldt} policies are omitted when their construction could not be completed within the 12-hour time budget. For the \gls{ldt} policy, the dashed part of the curve indicates that the construction step returned a policy that satisfies \eqref{eq:policy-design}, but without any guarantee of minimum depth.}
    \label{fig:numerics-evaluation}
\end{figure}

\Cref{fig:numerics-evaluation} confirms the analysis of \Cref{sec:numerics:structure}: the \gls{ldt} policy consistently outperforms both the convex hull policy and the other benchmark approaches when solving Problem~\eqref{prob:prob} for a given cost vector. Compared with the convex hull policy, it achieves speedups of up to four orders of magnitude. These gains tend to increase with problem size for \textsc{Cut} instances, in line with the observations of \Cref{sec:numerics:structure}, where $\convhull(\pset)$ exhibits a significantly more complex geometric structure than $\dualfaces(\pset)$. Comparable improvements are also observed relative to brute-force and optimization baseline methods. We further note that the dashed portion of the \gls{ldt} curve could likely be lowered with a larger construction budget, as shallower \gls{ldt} structures could have been obtained.

\subsection{Construction Performance}
\label{sec:numerics:construction}

As noted in the previous experiments, constructing \gls{ldt} policies requires substantial computational resources. In this section, we examine the factors driving this cost more closely and measure the impact of some of the acceleration strategies introduced in \Cref{sec:dynamic:acceleration} through an ablation study. Although policy construction involves computing the normal fan $\dualfaces(\pset)$, this step is negligible compared with the execution of \Cref{alg:buildtree-sampling} to synthesize the \gls{ldt} structure. It accounts for less than $0.1\%$ of the total running time across all instances. We therefore focus exclusively on metrics related to \gls{ldt} synthesis.\\

\paragraph{Construction metrics}
\Cref{tab:numerics-construction} reports several quantities related to the construction of \gls{ldt} policies via \Cref{alg:buildtree-sampling}. We consider the iterative strategy presented in \Cref{sec:dynamic} where one additional divider is introduced at each iteration, as well as a greedy strategy where the method is only run with $\hyperplanenumber = 1$, i.e., where a single divider is inspected per node. For both methods, we report the total running time of \Cref{alg:buildtree-sampling}, as well as the number of processed nodes and linear programs solved to recover the candidate cones and dividers associated with each node. We also report the depth gap between the \gls{ldt} produced by the greedy strategy and that obtained with the iterative strategy, since the greedy variant may overestimate the minimum depth by restricting the search to a single divider per node.

We observe that both the number of processed nodes and the number of linear programs solved grow rapidly with the instance dimension, reflecting the increasing memory and computational requirements of \gls{ldt} policy construction. This growth also stems from the relatively loose lower bound~\eqref{eq:lower-bound} used for pruning in \Cref{alg:buildtree-sampling}. Designing tighter bounds would likely be highly beneficial, but doing so remains challenging. The greedy strategy exhibits substantially lower running times. Interestingly, on instances where a minimum-depth \gls{ldt} could be constructed, we observe that the greedy variant consistently attains the same depth. It offers a favorable trade-off, enabling faster construction while maintaining strong evaluation performance through shallow \gls{ldt} structures. \\

\begin{table}[htbp]
    \setlength{\tabcolsep}{3.75pt}
    \centering
    \small
    \newcolumntype{E}{S[
  scientific-notation = true,
  retain-zero-exponent = true,
  round-mode = places,
  round-precision = 1,
  table-format = 1.1e2
]}

\begin{tabular}{
    cr
    |
    EEE 
    |
    EEEc
}
    \toprule
    \multicolumn{2}{c|}{Instance} &
    \multicolumn{3}{c|}{Iterative construction} &
    \multicolumn{4}{c}{Greedy construction}
    \\
    Class & $\sdim$ &
    \multicolumn{1}{l}{Time (sec.)} &
    \multicolumn{1}{l}{Nodes} &
    \multicolumn{1}{l|}{LP solved} &
    \multicolumn{1}{l}{Time (sec.)} &
    \multicolumn{1}{l}{Nodes} &
    \multicolumn{1}{l}{LP solved} &
    Gap
    \\
    \midrule
    \textsc{Knp} & 2
    & 0.001
    & 3
    & 11
    & 0.001
    & 3
    & 11
    & 0
    \\
    \textsc{Knp} & 3
    & 0.002
    & 3
    & 38
    & 0.002
    & 3
    & 38
    & 0
    \\
    \textsc{Knp} & 4
    & 0.005
    & 11
    & 106
    & 0.005
    & 7
    & 106
    & 0
    \\
    \textsc{Knp} & 5
    & 0.034
    & 165
    & 693
    & 0.024
    & 31
    & 475
    & 0
    \\
    \textsc{Knp} & 6
    & 0.056
    & 165
    & 1063
    & 0.046
    & 31
    & 859
    & 0
    \\
    \textsc{Knp} & 7
    & 2.691
    & 9367
    & 33617
    & 0.198
    & 127
    & 3068
    & 0
    \\
    \textsc{Knp} & 8
    & 597.714
    & 1403607
    & 4923709
    & 0.620
    & 383
    & 8187
    & 0
    \\
    \textsc{Knp} & 9
    & \multicolumn{1}{c}{--}
    & \multicolumn{1}{c}{--}
    & \multicolumn{1}{c|}{--}
    & 2.761
    & 1343
    & 28753
    & \multicolumn{1}{c}{$\times$} 
    \\
    \textsc{Knp} & 10
    & \multicolumn{1}{c}{--}
    & \multicolumn{1}{c}{--}
    & \multicolumn{1}{c|}{--}
    & 8.003
    & 3071
    & 62039
    & \multicolumn{1}{c}{$\times$} 
    \\
    \textsc{Knp} & 11
    & \multicolumn{1}{c}{--}
    & \multicolumn{1}{c}{--}
    & \multicolumn{1}{c|}{--}
    & 38.096
    & 11135
    & 209430
    & \multicolumn{1}{c}{$\times$} 
    \\
    \textsc{Knp} & 12
    & \multicolumn{1}{c}{--}
    & \multicolumn{1}{c}{--}
    & \multicolumn{1}{c|}{--}
    & 127.538
    & 32383
    & 589880
    & \multicolumn{1}{c}{$\times$} 
    \\
    \textsc{Knp} & 13
    & \multicolumn{1}{c}{--}
    & \multicolumn{1}{c}{--}
    & \multicolumn{1}{c|}{--}
    & 618.446
    & 117503
    & 2067505
    & \multicolumn{1}{c}{$\times$} 
    \\
    \textsc{Knp} & 14
    & \multicolumn{1}{c}{--}
    & \multicolumn{1}{c}{--}
    & \multicolumn{1}{c|}{--}
    & 2515.639
    & 388095
    & 6580522
    & \multicolumn{1}{c}{$\times$} 
    \\
    \textsc{Knp} & 15
    & \multicolumn{1}{c}{--}
    & \multicolumn{1}{c}{--}
    & \multicolumn{1}{c|}{--}
    & 11612.589
    & 1276159
    & 21195714
    & \multicolumn{1}{c}{$\times$} 
    \\
    \textsc{Knp} & 16
    & \multicolumn{1}{c}{--}
    & \multicolumn{1}{c}{--}
    & \multicolumn{1}{c|}{--}
    & 34410.410
    & 6352379
    & 98567113
    & \multicolumn{1}{c}{$\times$} 
    \\
    \textsc{Knp} & 17
    & \multicolumn{1}{c}{--}
    & \multicolumn{1}{c}{--}
    & \multicolumn{1}{c|}{--}
    & \multicolumn{1}{c}{--}
    & \multicolumn{1}{c}{--}
    & \multicolumn{1}{c}{--}
    & \multicolumn{1}{c}{--}
    \\
    \midrule
    \textsc{Cut} & 3
    & 0.001
    & 11
    & 19
    & 0.001
    & 7
    & 19
    & 0
    \\
    \textsc{Cut} & 4
    & 2.820
    & 15823
    & 45932
    & 0.042
    & 63
    & 865
    & 0
    \\
    \textsc{Cut} & 5
    & 38104.309
    & 15952401
    & 59501058
    & 2.643
    & 1151
    & 26923
    & 0
    \\ 
    \textsc{Cut} & 6
    & \multicolumn{1}{c}{--}
    & \multicolumn{1}{c}{--}
    & \multicolumn{1}{c|}{--}
    & 155.541
    & 20575
    & 662401
    & \multicolumn{1}{c}{$\times$} 
    \\
    \textsc{Cut} & 7
    & \multicolumn{1}{c}{--}
    & \multicolumn{1}{c}{--}
    & \multicolumn{1}{c|}{--}
    & 13230.398
    & 526783
    & 19597724
    & \multicolumn{1}{c}{$\times$} 
    \\
    \textsc{Cut} & 8
    & \multicolumn{1}{c}{--}
    & \multicolumn{1}{c}{--}
    & \multicolumn{1}{c|}{--}
    & \multicolumn{1}{c}{--}
    & \multicolumn{1}{c}{--}
    & \multicolumn{1}{c}{--}
    & \multicolumn{1}{c}{--}
    \\
    \midrule
    \textsc{Tsp} & 4
    & 0.001
    & 11
    & 17
    & 0.001
    & 7
    & 17
    & 0
    \\
    \textsc{Tsp} & 5
    & 1079.134
    & 3309852
    & 13380480
    & 0.291
    & 215
    & 4591
    & 0
    \\
    \textsc{Tsp} & 6
    & \multicolumn{1}{c}{--}
    & \multicolumn{1}{c}{--}
    & \multicolumn{1}{c|}{--}
    & 904.906
    & 79393
    & 3988532
    & \multicolumn{1}{c}{$\times$} 
    \\
    \textsc{Tsp} & 7
    & \multicolumn{1}{c}{--}
    & \multicolumn{1}{c}{--}
    & \multicolumn{1}{c|}{--}
    & \multicolumn{1}{c}{--}
    & \multicolumn{1}{c}{--}
    & \multicolumn{1}{c}{--}
    & \multicolumn{1}{c}{--}
    \\
    \bottomrule
\end{tabular}
    \caption{\gls{ldt} construction metrics. The symbol `--' indicates that \Cref{alg:buildtree-sampling} did not complete within the 12-hour time limit. When a minimum-depth \gls{ldt} could not be obtained, the greedy construction depth gap is reported as `$\times$'.}
    \label{tab:numerics-construction}
\end{table}

\paragraph{Ablation study}
We now assess the impact of the acceleration strategies introduced in \Cref{sec:dynamic:acceleration} through an ablation study on the \textsc{Knp}($6$), \textsc{Cut}($4$), and \textsc{Tsp}($5$) instances. \Cref{fig:numerics-ablation} reports statistics similar to those in \Cref{tab:numerics-construction}, after disabling the inference rules used for candidate cone and divider computation, or the divider-sorting procedure. The results suggest that the inference rules consistently provide substantial speedups, achieving more than an order-of-magnitude reduction in solving time. In contrast, the impact of divider sorting appears more limited, but still provides an acceleration factor of about $1.5$ for the \textsc{Tsp}($5$) instance.

\begin{table}[htbp]
    \centering
    \small
    \newcolumntype{E}{S[
  scientific-notation = true,
  retain-zero-exponent = true,
  round-mode = places,
  round-precision = 1,
  table-format = 1.1e2
]}

\begin{tabular}{cc|l|EEEc}
    \toprule
    Class & $\sdim$ & 
    Method & 
    \multicolumn{1}{l}{Time (sec.)} & 
    \multicolumn{1}{l}{Nodes} & 
    \multicolumn{1}{l}{LP solved} 
    \\
    \midrule    
    \textsc{Knp} & 7 & All accelerations & 2.691 & 9367 & 33617 
    \\
    \textsc{Knp} & 7 & No inference rules & 81.281 & 9367 & 955881 
    \\
    \textsc{Knp} & 7 & No separator sorting & 3.182 & 9367 & 33617 
    \\
    \midrule
    \textsc{Cut} & 4 & All accelerations & 2.820 & 15823 & 45932 
    \\
    \textsc{Cut} & 4 & No inference rules & 17.804 & 15823 & 252193 
    \\
    \textsc{Cut} & 4 & No separator sorting & 2.926 & 15862 & 48014 
    \\
    \midrule
    \textsc{Tsp} & 5 & All accelerations & 1079.134 & 3309852 & 13380480 
    \\
    \textsc{Tsp} & 5 & No inference rules & 14921.393 & 3309852 & 73683968 
    \\
    \textsc{Tsp} & 5 & No separator sorting & 1533.666 & 5175182 & 19699236 
    \\
    \bottomrule
\end{tabular}
    \caption{Construction statistics for different algorithmic configurations.}
    \label{fig:numerics-ablation}
\end{table}

\subsection{Toward Scalable Policies via Nearest Neighbor Search Methods}
\label{sec:numerics:nns}

To conclude our analyses, we explore the use of \Cref{prop:nns}, which establishes that \glspl{ilp} with purely binary feasible sets can be reduced to \gls{nns} problems. In particular, we investigate whether the transformed set $\scalingfunc(\pset)$ arising in this reduction can be encoded using standard \gls{nns} data structures so as to enable efficient recovery of optimal solutions to Problem~\eqref{prob:prob}. To this end, \Cref{fig:numerics-nns} reports the construction times of KD-tree \citep{bentley1975multidimensional}, HNSW \citep{malkov2018efficient}, and FAISS \citep{johnson2019billion} methods to encode the set of points in $\scalingfunc(\pset)$. We also report their evaluation time to recover a solution\footnote{While KD-tree and FAISS return an exact solution to \gls{nns} problems, HNSW is an approximate method. On average over all runs, HNSW yields a solution achieving a relative optimality gap of $5.80\%$ for \textsc{Knp} instances, $1.64\%$ for \textsc{Cut} instances, and $0.35\%$ for \textsc{Tsp} instances.} to Problem~\eqref{prob:prob}, averaged over 1000 cost vectors randomly sampled over the unit ball intersected with the respective domain of each \gls{ilp} class given in \Cref{sec:numerics:instances}. This average evaluation time is compared with those of the brute-force and optimization baseline approaches considered in \Cref{sec:numerics:evaluation}, as well as the \gls{ldt} policy. Two main observations emerge:
\begin{itemize}
    \item Although constructing \gls{ldt}-based policies is computationally demanding, \gls{nns} data structures offer substantially better scalability. They can be built for instances up to \textsc{Knp}(20) within milliseconds, \textsc{Cut}(20) within seconds, and \textsc{Tsp}(15) within hours.
    \item In terms of evaluation time, policies based on \gls{nns} data structures remain competitive. In the considered settings, HNSW achieves speedups of at least one order of magnitude on small- to moderate-sized instances relative to the optimization baseline.
\end{itemize}
Overall, these results suggest that \gls{nns}-based policies are a promising direction for binary \glspl{ilp}, offering a more favorable trade-off between construction cost and evaluation performance. Moreover, there remains substantial room to further exploit the specific structure of the set $\scalingfunc(\pset)$ induced by binary \glspl{ilp} in \gls{nns} techniques.

\begin{figure}[!ht]
    \centering
    \def\solvers{
    bruteforce/Brute force,
    baseline/Optim. baseline,
    kdtree/KD-tree,
    hnsw/HNSW,
    faiss/FAISS,
    ldtree/LDT policy,
    ldtree-subopt/LDT policy (suboptimal)%
}

\pgfplotscreateplotcyclelist{cycle_nns}{
    {Red, very thick, mark=*, mark options={scale=0.33}},
    {Orange, very thick, mark=*, mark options={scale=0.33}},
    {SeaGreen, very thick, mark=*, mark options={scale=0.33}},
    {OliveGreen, very thick, mark=*, mark options={scale=0.33}},
    {LimeGreen, very thick, mark=*, mark options={scale=0.33}},
    {Blue, very thick, mark=*, mark options={scale=0.33}},
    {Blue, thick, mark=*, mark options={scale=0.33, solid}, densely dotted},
}

\pgfplotstableread[col sep=comma]{data/knp_build.csv}\knpbuild
\pgfplotstableread[col sep=comma]{data/knp_query.csv}\knpquery
\pgfplotstableread[col sep=comma]{data/cut_build.csv}\cutbuild
\pgfplotstableread[col sep=comma]{data/cut_query.csv}\cutquery
\pgfplotstableread[col sep=comma]{data/tsp_build.csv}\tspbuild
\pgfplotstableread[col sep=comma]{data/tsp_query.csv}\tspquery

\begin{tikzpicture}
    \begin{groupplot}[
        group style={
            group size=3 by 2,
            horizontal sep=1.25cm,
            vertical sep=1.25cm
        },
        width=4.5cm,
        height=4.5cm,
        ymode=log,
        xtick={5,10,15,20},
        grid=both,
        minor grid style={gray!25},
        major grid style={gray},
        cycle list name=cycle_nns
    ]

        \nextgroupplot[
            title={\textsc{Knp} class},
            ylabel={Construction time (sec.)},
            ytick={0.00001,0.001,0.1,10,1000,100000},
        ]
        \foreach \solver/\solverlabel in \solvers {
            \addplot table[x=dims, y=\solver]{\knpbuild};
        }
        \addplot[black, very thick, dashed] table[x=dims, y=time-limit]{\knpbuild};

        \nextgroupplot[
            title={\textsc{Cut} class},
            ytick={0.00001,0.001,0.1,10,1000,100000},
        ]
        \foreach \solver/\solverlabel in \solvers {
            \addplot table[x=dims, y=\solver]{\cutbuild};
        }
        \addplot[black, very thick, dashed] table[x=dims, y=time-limit]{\cutbuild};

        \coordinate (top) at (rel axis cs:0.5,1);

        \nextgroupplot[
            title={\textsc{Tsp} class},
            ytick={0.00001,0.001,0.1,10,1000,100000},
        ]
        \foreach \solver/\solverlabel in \solvers {
            \addplot table[x=dims, y=\solver]{\tspbuild};
        }
        \addplot[black, very thick, dashed] table[x=dims, y=time-limit]{\tspbuild};

        \nextgroupplot[
            xlabel={Parameter $\sdim$},
            ylabel={Evaluation time (sec.)},
            ytick={0.00000001,0.000001,0.0001,0.01},
            legend to name=legend_nns,
            legend columns=3,
            legend cell align=left,
            legend style={/tikz/every even column/.append style={column sep=0.25cm}}
        ]
        \foreach \solver/\solverlabel in \solvers {
            \addplot table[x=dims, y=\solver]{\knpquery};
            \ifthenelse{\equal{\solver}{ldtree-subopt}}{}{
                \addlegendentryexpanded{\solverlabel}
            }
        }

        \nextgroupplot[
            xlabel={Parameter $\sdim$},
            ytick={0.00000001,0.000001,0.0001,0.01,1}
        ]
        \foreach \solver/\solverlabel in \solvers {
            \addplot table[x=dims, y=\solver]{\cutquery};
        }

        \nextgroupplot[
            xlabel={Parameter $\sdim$},
            ytick={0.00000001,0.00001,0.01,10,10000},
            xmax = 16
        ]
        \foreach \solver/\solverlabel in \solvers {
            \addplot table[x=dims, y=\solver]{\tspquery};
        }
    \end{groupplot}

    \node[yshift=1.5cm,font=\small] at (top.north) {\ref{legend_nns}};
\end{tikzpicture}
    \caption{Construction and evaluation time for policies based on \gls{nns} data structures. The black dashed line indicates the 12-hour time budget allowed for their construction.}
    \label{fig:numerics-nns}
\end{figure}

\section{Related Works on Decision Policies}
\label{sec:related-works}

Relatively few studies have considered decision policies that fit the framework of \eqref{eq:policy-design}, in which an optimal solution to Problem~\eqref{prob:prob} must be returned at query time, although this property may be crucial for high-stakes applications. Beyond \gls{ldt} policies, the main exact approaches that have been investigated are based on convex-hull representations and separation oracles. A broader literature has also studied heuristic policies, which allow approximate solutions, as well as optimization methods based on learning that harness knowledge gained on different problems.\\

\paragraph{Policies based on the convex hull}
Problem~\eqref{prob:prob} can be equivalently formulated as a linear program over the convex hull of the feasible set. Based on this observation, prior work has proposed policies in which $\convhull(\pset)$ is constructed offline during the construction phase, and a solution to Problem~\eqref{prob:prob} is recovered at query time by solving the surrogate problem $\max_{\pv \in \convhull(\pset)} \transpose{\cv}\pv$ using linear programming techniques \citep{chazelle1991optimal}. This policy design is broadly applicable, since convex hulls can be constructed generically \citep{motzkin1953double}, but their geometry is often highly complex, making the construction phase computationally challenging \citep{christofsmapo}. Moreover, when evaluation relies on the simplex algorithm, the number of pivot steps depends on the maximum vertex degree in the convex hull \citep{klee1972good}. This quantity can be exponential in $\pdim$ for some \gls{ilp} instances \citep{nikolaev2023cone}, so the evaluation complexity of such convex hull policies may itself remain exponential.\\

\paragraph{Policies based on a separation oracle}
Alternatively, instead of building $\convhull(\pset)$ explicitly, one may rely on a separation oracle for the convex hull.
If such an oracle can be built during a construction step, the ellipsoid method can be adapted to perform the evaluation step and recover an optimal solution to Problem~\eqref{prob:prob} for any queried cost vector \citep{grotschel1981ellipsoid}.
Each iteration is carried out with complexity $\bigO(\pdim^2 + \Gamma(\pdim))$, where $\Gamma(\pdim)$ denotes the complexity of querying the separation oracle. In total, $\bigO(\pdim^2 \log(\epsilon^{-1}))$ iterations are required to reach $\epsilon$-optimality during this evaluation step \citep{khachiyan1980polynomial}.
Consequently, the complexity of solving Problem~\eqref{prob:prob} to some prescribed numerical tolerance using this policy design is polynomial whenever $\Gamma(\pdim)$ is polynomial in $\pdim$. 
This property can be used to prove that maximum weighted matching problems can be solved with polynomial complexity without resorting to Edmonds' algorithm \citep{grotschel1981ellipsoid}.\\

\paragraph{Heuristic policies}
In addition to policies that must return an exact solution to Problem~\eqref{prob:prob} at query time, some studies have considered policies that allow inexact solutions. These policies typically rely on learning-based strategies during their construction phase \citep{khalil2017learning,bello2016neural}. At query time, they may either produce a surrogate problem formulation that can be solved more efficiently \citep{bertsimas2021voice,khalil2017learning}, or directly infer a solution through a machine learning model \citep{shen2023adaptive,xin2021multi}. As they do not guarantee the optimality of the solution returned, techniques to analyze the solution quality of these heuristic policies have been proposed \citep{chen2024compact}. This heuristic paradigm also allows the design of policies for more general optimization problems than \glspl{ilp} \citep{bertsimas2018binary}.\\

\paragraph{Direct optimization methods}
Beyond pure decision policies that rely on a heavy construction step to support efficient queries, the structure of specific feasible sets can also be exploited in direct optimization methods such as branch-and-cut algorithms \citep{mitchell2002branch}, decomposition techniques \citep{barnhart1998column}, or dynamic programming procedures \citep{bellman1962dynamic}. As with decision policies, a preprocessing step can be performed to leverage similarities across instances that are solved repeatedly in direct methods. For example, branching rules \citep{lodi2017learning}, exploration strategies \citep{he2014learning}, and column-generation techniques \citep{shen2022enhancing} in branch-and-bound algorithms can be implemented using trained learning models rather than prescribed rules. Heuristic methods can also benefit from such techniques to better drive their search among feasible solutions \citep{mirshekarian2018machine,manchanda2022generalization}. However, in contrast to decision policies, the complexity of solving \glspl{ilp} with these approaches remains largely tied to that of the direct optimization method considered.

\section{Conclusion and Perspectives}
\label{sec:conclusion}

In this paper, we introduced \gls{ldt} policies as an exact offline-online paradigm for \glspl{ilp} with a fixed feasible set and varying cost vectors.
We showed that, under a mild encoding assumption, such policies always exist with polynomial query complexity in the \gls{ldt} model, but that deciding whether an exact policy with a prescribed maximum number of leaves exists is $\Sigma_2^p$-complete.
Additionally, we developed a concrete synthesis method based on dynamic programming and pruning for a structured subclass of \glspl{ldt} induced by the normal fan of the feasible set. 
As shown by our numerical experiments, although policy construction can be demanding, the resulting policies are of practical interest, as they can answer repeated optimization queries orders of magnitude faster than classical exact methods on small instances. In the binary case, we also identified a natural reformulation as a nearest-neighbor search problem, which opens additional perspectives for scalable policy representations. Overall, this work proposes a new perspective on exact policy synthesis for repeated combinatorial optimization and opens several promising research directions.\\

From a theoretical standpoint, a key open question is whether the existential polynomial query-complexity guarantee can be matched by a practical construction procedure. The main challenge is that the existence result imposes no restriction on the linear tests allowed in the associated \gls{ldt}, whereas the constructive method developed in \Cref{sec:dynamic} searches only within a structured subclass. If one could show that a finite family of candidate tests is sufficient to recover the complexity bound of \Cref{prop:query-complexity}, then the practical framework introduced here could in principle be extended to synthesize such policies with polynomial query guarantees. A natural intuition is that the dividers $\dividers(\pset)$ defined in \eqref{eq:dividers} already contain all the relevant information, but establishing this remains an open question.\\

From a methodological standpoint, an important direction for future research is to improve the scalability of policy construction. In the binary case, reformulating \glspl{ilp} as \gls{nns} problems warrants further exploration. While \gls{nns} is itself challenging in high dimensions, the queries considered here possess a specific structure, since they are performed over a linear transformation of the feasible set of the underlying \gls{ilp}. This opens the door to tailored \gls{nns} methods. For example, the structure of specific \gls{ilp} instances could be leveraged during the construction of KD-trees or related geometric search methods. More broadly, one may also move beyond the strict goal of exact policy synthesis and investigate incremental constructions of \gls{ldt}- or \gls{nns}-based policies with certified approximation guarantees. Such an approach would lead to a disciplined family of approximation algorithms that explicitly trade oracle-construction effort for a controlled solution gap.\\

Finally, these policy representations may find practical value in broader optimization pipelines where repeated exact solution of small combinatorial subproblems is a computational bottleneck. 
A first class of applications arises in bilevel and hierarchical optimization, where the lower-level problem has the form considered in this paper and must be evaluated repeatedly for varying upper-level decisions.
In such settings, \gls{ldt} policies could in principle be encoded through MILP formulations of the underlying tree structure (for example, following ideas similar to \citep{Parmentier2021c}), thereby providing an alternative way to represent follower reactions alongside more classical reformulations based on optimality conditions when these are available \citep{kleinert2021survey}. A second class of applications arises in decomposable combinatorial problems. For example, cluster-first route-second approaches for vehicle routing repeatedly evaluate customer-to-vehicle assignments by solving many smaller traveling salesman subproblems \citep{Toffolo2019,Vidal2020}. When these subproblems remain within the tractable regime identified in this paper, an \gls{ldt} policy oracle could substantially reduce the overall method's computational burden and enable very fast exact cost evaluations for any given visit cluster. Finally, since any \gls{ldt} can be flattened into a sequence of nested \texttt{if-else} statements and elementary arithmetic operations, as illustrated in \Cref{app:flattened-ldt}, these policies may also provide a useful basis for designing specialized hardware for very specific control tasks, in domains where reliability and response time are critical.

\clearpage
\appendix

\section{Flattened Linear Decision Tree}
\label{app:flattened-ldt}

Any \gls{ldt} structure can be flattened into a sequence of nested \texttt{if-else} statements and elementary arithmetic operations, as illustrated in \Cref{fig:flattened-ldt}. 
The flattened representation of \gls{ldt} policies generated for our numerical experiments are available at:
\begin{center}
    ~\\
    \texttt{\url{https://github.com/TheoGuyard/ilp-oracles}}
    ~\\~\\
\end{center}
This repository is intended to serve as a baseline for a broader collaborative library that gathers optimal policies for canonical \gls{ilp} families.

\begin{figure}[htb]
    \centering
    \begin{lstlisting}[style=CStyle]
#define N 6   // problem dimension

static const int *query(double *c) {
    if (c[1] - c[3] + c[5] < 0.0) {
        if (c[2] - c[4] + c[5] < 0.0) {
            if (c[0] - c[1] - c[2] < 0.0) {
                static int x[N] = {0, 1, 1, 1, 1, 0};
                return x;
            } else {
                static int x[N] = {1, 0, 0, 1, 1, 0};
                return x;
            }
        } else {
            if (c[0] - c[1] + c[2] < 0.0) {
                if (c[2] + c[4] - c[5] < 0.0) {
                    static int x[N] = {0, 1, 0, 1, 0, 1};
                    return x;
                } else {
                    static int x[N] = {0, 1, 1, 1, 1, 0};
                    return x;
                }
            } else {
                if (c[0] - c[1] - c[4] + c[5] < 0.0) {
                    static int x[N] = {0, 1, 1, 1, 1, 0};
                    return x;
                } else {
                    static int x[N] = {1, 0, 1, 1, 0, 1};
                    return x;
                }
            }
        }
    } else {
        ...
    }
}
    \end{lstlisting}
    \caption{Flattened C representation of an \gls{ldt} policy to solve any \textsc{Cut}(4) instance, given a cost vector $\cv = (c_{12}, c_{13}, c_{14}, c_{23}, c_{24}, c_{34})$ encoding the negative of the edge weights between each pair of vertices in the graph. The total function spans 111 lines and only the 35 first ones are displayed.}
    \label{fig:flattened-ldt}
\end{figure}

\bibliographystyle{siamplain}
\bibliography{main}

\end{document}